\DeclareSymbolFontAlphabet{\mathcal}{symbols}
\numberwithin{equation}{section}
\definecolor{trama}{gray}{.875}
\newtheorem{theorem}{Theorem}[section]
\newtheorem{proposition}[theorem]{Proposition} %
\newtheorem{corollary}[theorem]{Corollary}
\theoremstyle{definition}
\newtheorem{definition}[theorem]{Definition}
\newtheorem{example}[theorem]{Example}
\theoremstyle{remark}
\newtheorem{remark}[theorem]{Remark}
\numberwithin{equation}{section}
\newcommand{\secref}[1]{Section~\ref{#1}}
\newcommand{\thmref}[1]{Theorem~\ref{#1}}
\newcommand{\propref}[1]{Proposition~\ref{#1}}
\newcommand{\corref}[1]{Corollary~\ref{#1}}
\newcommand{\remref}[1]{Remark~\ref{#1}}
\newcommand{\exemref}[1]{Example~\ref{#1}}
\newcommand{\defref}[1]{Definition~\ref{#1}}
\def\ov{\overline}
\newcommand{\cU}{\mathcal U}
\newcommand{\cD}{\mathcal D}
\def\gd{{\mathfrak{d}}}
\def\gC{{\mathfrak{C}}}
\def\gH{{\mathfrak{H}}}
\def\tc{{\mathtt c}}
\def\tv{{\mathtt v}}
\def\N{\mathbb{N}}
\def\R{\mathbb{R}}
\def\Z{\mathbb{Z}}
\def\im{{\rm Im\,}}
\def\codim{{\rm codim\,}}
\def\pr{{\rm pr}}
\def\rc{{\mathring{\tc}}}
\def\cS{{\mathcal S}}
 \def\1{{\mathbf 1}}
   \def\tN{{\widetilde{N}}}
\def\crH{{\mathscr H}}
\def\gQ{{\mathfrak{Q}}}
\def\tto{{\mathtt{o}}}
 \newcommand{\menos}{\backslash}
\newcommand{\dos}[2]{{#1}_{_{#2}}}
\newcommand{\Hiru}[3]{{#1}^{^{#2}}{( #3 )}}
\newcommand{\lau}[4]{{#1}^{^{#2}}_{_{#3}}{( #4 )}}
\title[Borel-Moore Intersection Homology]{Poincar\'e duality, cap product and Borel-Moore Intersection Homology }
\date{\today}
\author{Martintxo Saralegi-Aranguren}
\address{Laboratoire de Math{\'e}matiques de Lens\\  
      EA 2462 \\
      Universit\'e d'Artois\\
         SP18, rue Jean Souvraz\\
          62307 Lens Cedex\\
         France}
\email{martin.saraleguiaranguren@univ-artois.fr}
\author{Daniel Tanr\'e}
\address{D\'epartement de Math{\'e}matiques\\
         UMR-CNRS 8524 \\
         Universit\'e de Lille\\
         59655 Villeneuve d'Ascq Cedex\\
         France}
\email{Daniel.Tanre@univ-lille.fr}
\thanks{
The second author was  supported by the MINECO and FEDER research project MTM2016-78647-P. 
and the ANR-11-LABX-0007-01  ``CEMPI''}
\subjclass[2010]{55N33, 57P10, 57N80, 55U30}
\keywords{Intersection homology; Borel-Moore chains; cap product;  Poincar\'e duality}
\begin{document} 

\begin{abstract}
Using a cap product, we construct an explicit Poincar\'e 
duality isomorphism between the blown-up intersection
cohomology and the Borel-Moore intersection homology, for any commutative ring of coefficients and 
second-countable, oriented pseudomanifolds.
\end{abstract}

\maketitle

\section*{Introduction}
Poincar\'e duality of singular spaces is the ``raison d'\^etre'' (\cite[Section 8.2]{FriedmanBook}) 
of intersection homology.
It has been proven by Goresky and MacPherson in their first paper on intersection homology
(\cite{GM1}) for compact PL pseudomanifolds and rational coefficients
and extended to $\Z$ coefficients with some hypothesis on the torsion part, by Goresky and Siegel in \cite{GS}.
Friedman and McClure obtain this isomorphism for a topological pseudomanifold, from a cap product with a fundamental class,
for any field of coefficients in \cite{FM}, see also \cite{FriedmanBook} for a commutative ring 
of coefficients with restrictions on the torsion.

\medskip
Using the blown-up intersection cohomology with compact supports, 
we have established in \cite{CST2} a Poincar\'e duality for any
commutative ring of coefficients, without hypothesis on the torsion part, for any oriented paracompact pseudomanifold.
Moreover, we also set up in \cite{CST5} a Poincar\'e duality between the
blown-up intersection cohomology and the Borel-Moore intersection homology of an oriented  PL pseudomanifold $X$. 

\smallskip
This paper is the ``cha\^inon manquant:'' the existence of an explicit  Poincar\'e duality isomorphism 
between the
blown-up intersection cohomology and the Borel-Moore intersection homology, from a 
cap product with the fundamental class, for any commutative ring of coefficients
 and any second-countable, oriented pseudomanifold. 
 This allows the definition of an intersection product on the Borel-Moore intersection homology, 
 induced from the Poincar\'e duality and a cup product, as in the case of manifolds, see
 \corref{cor:intersectionproduct} and \remref{rem:lastone}.
 Let us note that  a Poincar\'e duality isomorphism cannot 
 exist with an intersection cohomology defined as homology of the dual complex of intersection chains,
 see \exemref{exam:pasdual}.

\smallskip
In \secref{sec:back}, we recall basic background on pseudomanifolds and intersection homology. In particular, 
we present the complex of blown-up cochains, already introduced and studied in a series of papers 
\cite{ CST6,CST7, CST4, CST1,CST2,CST5,CST3} (also called Thom-Whitney cochains in some works).
\secref{sec:BM} contains the main properties of Borel-Moore intersection homology: the existence of 
a Mayer-Vietoris exact sequence in \thmref{thm:MV} and the recollection of some results established in \cite{CST5}.
\secref{sec:Poincare} is devoted to the proof of the main result stated in \thmref{thm:dual}:
the existence of an isomorphism
between the blown-up intersection cohomology and the 
Borel-Moore intersection homology, by using the cap-product with the fundamental
class of a second-countable, oriented pseudomanifold, 
$$\cD_{X}\colon \crH^*_{\ov{p}}(X;R)\to \gH^{\infty,\ov{p}}_{n-*}(X;R),$$
for any commutative ring of coefficients.
In \cite{CST5}, we prove it for PL-pseudomanifolds with a sheaf presentation of  intersection homology.
Here the duality is realized for topological pseudomanifolds,
by a map defined at  the chain complexes level by the cap product 
with a cycle representing the fundamental class.

\smallskip
Notice also that  the intersection homology of this work (\defref{def:lessimplexes}) 
is a general version, called tame homology in 
\cite{CST3} and non-GM in \cite{FriedmanBook}, which  coincides with the original one   for the perversities of \cite{GM1}.
Let us  also observe that
our definition of Borel-Moore intersection homology coincides with the
one studied by G. Friedman in \cite{MR2276609} for  perversities depending
only on the codimension of strata.

\medskip
Homology and cohomology are considered with coefficients in a commutative ring, $R$. In general, we do not
explicit them in the proofs. 
For any topological space $X$, we denote by $\tc X=X\times [0,1]/X\times\{0\}$
the cone on $X$
and by $\rc X=X\times [0,1[/X\times\{0\}$ the open cone on $X$.
Elements of the cones $\tc X$ and $\rc X$ are denoted $[x,t]$ and the apex is $\tv=[-,0]$.

\tableofcontents

\section{Background}\label{sec:back}

\subsection{Pseudomanifold}
In \cite{GM1}, M. Goresky and R. MacPherson introduce intersection homology for the study of pseudomanifolds.
Some basic properties of intersection homology, as the existence of a Mayer-Vietoris sequence, 
do not require such a structure and exist for filtered spaces.

\begin{definition}
A \emph{filtered space of (formal) dimension $n$,} $X$, is a Hausdorff space
endowed with a filtration by closed subsets,
$$\emptyset=X_{-1}\subseteq X_0\subseteq X_1\subseteq\dots\subseteq X_n=X,$$
such that $X_n\backslash X_{n-1}\neq \emptyset$.
The \emph{strata} of $X$ of dimension $i$ are the connected components $S$ of $X_{i}\backslash X_{i-1}$; we
denote $\dim S=i$ and $\codim S=\dim X-\dim S$. The \emph{regular strata} are the strata of dimension $n$
and  the \emph{singular set} is the subspace $\Sigma =X_{n-1}$. We denote by $\cS_{X}$ (or $\cS$ if there is no ambiguity) the set of non-empty strata.
\end{definition}

\begin{definition}
An $n$-dimensional \emph{CS set}  is a 
filtered space of dimension $n$, $X$,
such that, for any $i\in\{0,\dots,n\}$, 
$X_i\backslash X_{i-1}$ is an $i$-dimensional topological manifold or the empty set. Moreover, for each point
$x \in X_i \backslash X_{i-1}$, $i\neq n$, there exist
\begin{enumerate}[(i)]
\item an open neighborhood $V$ of $x$ in $X$, endowed with the induced filtration,
\item an open neighborhood $U$ of $x$ in  $X_i\backslash X_{i-1}$, 
\item a compact filtered space $L$  of dimension $n-i-1$, whose cone $\rc L$ is endowed with the filtration
$(\rc L)_{i}=\rc L_{i-1}$, 
\item a homeomorphism, $\varphi \colon U \times \rc L\to V$, 
such that
\begin{enumerate}[(a)]
\item $\varphi(u,\tv)=u$, for any $u\in U$, where $\tv$  is the apex of the cone $\rc L$,
\item $\varphi(U\times \rc L_{j})=V\cap X_{i+j+1}$, for all $j\in \{0,\dots,n-i-1\}$.
\end{enumerate}
\end{enumerate}
The filtered space $L$ is called a  \emph{link} of $x$. 
\end{definition}

Except in the reminder of a previous result (see \propref{prop:supersuperbredon}), 
this work is only concerned with particular CS sets, the pseudomanifolds.

\begin{definition}
An $n$-dimensional \emph{pseudomanifold}  is an $n$-dimensional CS set for which the link
of a point $x\in X_i\backslash X_{i-1}$, $i\neq n$, is 
a compact pseudomanifold $L$  of dimension $n-i-1$.
\end{definition}

For a pseudomanifold, the formal dimension of the underlying filtered space coincides with 
the classical dimension of the manifold $X_{n}\backslash X_{n-1}$.
In \cite{GM1}, the pseudomanifolds are supposed without strata of codimension~1. 
Here, we do not require  this property.
The class of pseudomanifolds is large anough to include 
(\cite{GM2}) complex algebraic or analytic varieties, 
real analytic varieties, Whitney and  Thom-Mather stratified spaces, quotients of manifolds by
compact Lie groups acting smootly, Thom spaces of vector bundles over triangulable compact manifolds,
suspension of manifolds, ... 

\begin{remark}\label{rem:plentyofdef}
For the convenience of the reader, we first collect basic topological definitions. A topological space
$X$  is said
\begin{enumerate}
\item \emph{separable}  if it contains a countable, dense subset;
\item \emph{second-countable} if its topology has a numerable basis; that is
there exists some numerable collection $\cU = \{U_{j} |j \in \N\}$ of open subsets such that
any open subset of $X$ can be written as a union of elements of some subfamily of $\cU$;
\item \emph{hemicompact} if it is locally compact and there exists a numerable sequence of 
relatively compact open subsets,
$(U_{i})_{i\in \N}$, such that $\ov{U}_{i}\subset U_{i+1}$
and
$X=\cup_{i}U_{i}$.
\end{enumerate}
To relate hypotheses of some following results to ones of previous works, 
we list some interactions between these  notions.
\begin{itemize}
\item A second countable space is separable and, in a metric space, the two properties are equivalent
(\cite[Theorem 16.9]{Wil}).
\item A space is locally compact and second-countable if, and only if, it is metrizable and hemicompact, see 
\cite[Corollaire of Proposition 16]{MR0173226}.
\end{itemize}
\end{remark}

As second-countability is a hereditary property, any open subset of a second-countable pseudomanifold 
is one also.  Moreover, we also know that
a second-countable pseudomanifold  is 
paracompact, separable, metrizable and hemicompact.

\subsection{Intersection homology}
We consider intersection homology relatively to the  general perversities defined  in \cite{MacPherson90}.

\begin{definition}\label{def:perversite} 
A  \emph{perversity on a filtered space,} $X$, is an  application,
$\ov{p}\colon \cS\to \Z$, defined on the set of strata of $X$ and taking the value~0 on the regular strata.
Among them, mention the null perversity $\ov{0}$, constant with value~0, and the \emph{top perversity} defined by
$\ov{t}(S)=\codim S-2$ on singular strata. 
(This infers $\ov{t}(S)=-1$ for codimension~1 strata.)
For any perversity, $\ov{p}$, the perversity $D\ov{p}:=\ov{t}-\ov{p}$ is called the
\emph{complementary perversity} of $\ov{p}$.

The pair $(X,\ov{p})$ is called a \emph{perverse space}.
For a pseudomanifold we say \emph{perverse pseudomanifold.}
\end{definition}

\begin{example}
Let $(X,\ov{p})$ be a perverse space of dimension $n$.
\begin{itemize}
\item An \emph{open perverse subspace $(U,\ov{p})$}  is an open subset $U$ of $X$,
endowed with the induced filtration and a  perversity still denoted $\ov{p}$ and  defined as follows:
if $S\subset U$ is a stratum of  $U$, such that $S\subset U\cap S'$ with $S'$ a stratum of
$X$, then $\ov{p}(S)=\ov{p}(S')$.
In the case of a perverse pseudomanifold,  $(U,\ov{p})$ is one also.
\item If $M$ is a connected topological manifold, the product  $M\times X$ is a filtered space
for the \emph{product filtration,}
$\left(M \times X\right) _i = M \times X_{i}$. 
The perversity $\ov p$ induces a perversity on $M\times X$, still denoted  $\ov p$
and  defined by $\ov p(M \times S) = \ov p(S)$ for each stratum $S$ of $X$.
\item If $X$ is compact, the open cone 
$\rc X $ is endowed with the 
\emph{conical filtration,} 
$\left(\rc X\right) _i =\rc X_{i-1}$,  $0\leq i\leq n+1$, where
$\rc \,\emptyset=\{ \tv \}$ is the apex of the cone.
A perversity $\ov{p}$ on   $\rc X$ induces a perversity on $X$ still denoted  $\ov{p}$ and defined by
$\ov{p}(S)=\ov{p}(S\times ]0,1[)$ for each stratum $S$ of $X$.
\end{itemize}
\end{example}

\medskip
\emph{For the rest of this section, we consider a perverse space $(X,\ov{p})$.}
 We introduce now a chain complex  giving  the intersection homology with coefficients in $R$, cf. \cite{CST3}.

\begin{definition}\label{def:regularsimplex}
A  \emph{regular simplex} is a continuous map $\sigma\colon\Delta\to X$
of domain an Euclidean simplex decomposed in joins,
$\Delta=\Delta_{0}\ast\Delta_{1}\ast\dots\ast\Delta_{n}$,
such that
$\sigma^{-1}X_{i} =\Delta_{0}\ast\Delta_{1}\ast\dots\ast\Delta_{i}$
for all~$i \in \{0, \dots, n\}$ and
$\Delta_n \ne \emptyset$.

Given an Euclidean regular simplex $\Delta = \Delta_0 * \dots *\Delta_n$, we denote  
$\gd\Delta$ the regular part of the chain $\partial \Delta$.  
More precisely, we set 
$\gd \Delta =\partial (\Delta_0 * \dots * \Delta_{n-1})* \Delta_n$, if $\dim(\Delta_n) = 0 $
and
 $\gd \Delta = \partial \Delta$,
if $\dim(\Delta_n)\geq 1$.
For any regular simplex $\sigma \colon\Delta \to X$, we set  $\gd \sigma=\sigma_* \circ \gd$.
 Notice that $\gd^2=0$.
 We denote by $\gC_{*}(X;R)$ 
 the  complex of  linear combinations of regular  simplices (called finite chains)
with the differential~$\gd$.
\end{definition}

 \begin{definition}\label{def:lessimplexes}
 The \emph{perverse degree of} a regular simplex $\sigma\colon\Delta=\Delta_{0}\ast \dots\ast\Delta_{n} \to X$
  is the $(n+1)$-uple,
$\|\sigma\|=(\|\sigma\|_0,\dots,\|\sigma\|_n)$,  
where 
 $\|\sigma\|_{i}=
 \dim \sigma^{-1}X_{n-i}=\dim (\Delta_{0}\ast\dots\ast\Delta_{n-i})$, 
 with the  convention $\dim \emptyset=-\infty$.
For each stratum $S$  of $X$, the \emph{perverse degree of $\sigma$ along $S$} is defined by
$$\|\sigma\|_{S}=\left\{
 \begin{array}{cl}
 -\infty,&\text{if } S\cap \sigma(\Delta)=\emptyset,\\
 \|\sigma\|_{\codim S},&\text{otherwise.}
  \end{array}\right.
$$
 A \emph{regular simplex is $\ov{p}$-allowable} if
   \begin{equation*} 
  \|\sigma\|_{S}\leq \dim \Delta-\codim S+\ov{p}(S),
  \end{equation*}
   for any stratum $S$.
   A finite chain $\xi$ is 
   \emph{$\ov{p}$-allowable} if it is a linear combination of $\ov{p}$-allowable simplices
   and of  \emph{$\ov{p}$-intersection} if  $\xi$ and its boundary $\gd \xi$ are  $\ov{p}$-allowable.
   We denote by $\gC_{*}^{\ov{p}}(X;R)$ 
the complex  of  $\ov{p}$-intersection chains and by 
$\gH_{*}^{\ov{p}}(X;R)$
 its homology, called \emph{$\ov{p}$-intersection homology}.
 \end{definition}
 
If $(U,\ov{p})$ is an open perverse subspace  of $(X,\ov{p})$, we define the
\emph{complex of relative   $\ov{p}$-intersection chains} as the quotient
$\gC_{*}^{\ov{p}}(X,U;R)=
\gC_{*}^{\ov{p}}(X;R)/
\gC_{*}^{\ov{p}}(U;R)$.
Its homology  is denoted $\gH_{*}^{\ov{p}}(X,U;R)$.
Finally, if $K\subset U$ is compact,
we have 
$\gH_{*}^{\ov{p}}(X,X\menos K;R)=
\gH_{*}^{\ov{p}}(U,U\menos K;R)
$ by excision, cf. \cite[Corollary  4.5]{CST3}.

\begin{remark}
This homology is called tame intersection homology in \cite{CST3}. 
As we are only using it in this work, for sake of simplicity, we call it  intersection homology. 
It coincides with the non-GM intersection homology of  \cite{FriedmanBook} (see \cite[Theorem~B]{CST3}) and with
 intersection homology for the original perversities of \cite{GM1}, see \cite[Remark 3.9]{CST3}.

 The GM-perversities introduced by Goresky and MacPherson in \cite{GM1} depend only on the codimension
 of the strata and verify a growth condition,
 $\ov{p}(k)\leq \ov{p}(k+1)\leq \ov{p}(k)+1$,
 that implies the topological invariance of the intersection homology groups, if there is no strata of codimension~1. In
 \cite{CST3}, we prove that a certain topological invariance is still verified within the framework of strata dependent
 perversities. For that, we consider the intrinsic space, $X^*$, associated to a pseudomanifold $X$
 (introduced by King in \cite{MR800845})
 and use  pushforward and pullback perversities. 
 In particular, if there is no singular strata in $X$ becoming regular in $X^*$, we establish the invariance of
 intersection homology under a refinement of the stratification
 (\cite[Remark 6.14]{CST3}).
\end{remark}

\subsection{Blown-up intersection cohomology}
Let 
$N_{*}(\Delta)$ and $N^*(\Delta)$
be the simplicial chains and cochains, with coefficient in $R$, of an Euclidean 
simplex $\Delta$. 
Given a face $F$  of $\Delta$, we write $\1_{F}$ the element of $N^*(\Delta)$ taking the value 1 on $F$ and 0 otherwise. 
We denote also by $(F,0)$ the same face viewed as face of the cone $\tc\Delta=[\tv]\ast \Delta $ and by $(F,1)$ 
the face $\tc F$ of $\tc \Delta$. 
The apex is denoted $(\emptyset,1)=\tc \emptyset =[\tv]$.

If $\Delta=\Delta_{0}\ast\dots\ast\Delta_{n}$ is a regular Euclidean simplex, we set
$$\tN^*(\Delta)=N^*(\tc \Delta_{0})\otimes\dots\otimes N^*(\tc \Delta_{n-1})\otimes N^*(\Delta_{n}).$$
A basis of $\tN^*(\Delta)$ is made of the elements 
$\1_{(F,\varepsilon)}=\1_{(F_{0},\varepsilon_{0})}\otimes\dots\otimes \1_{(F_{n-1},\varepsilon_{n-1})}\otimes \1_{F_{n}}$,
 where 
$\varepsilon_{i}\in\{0,1\}$ and
$F_{i}$ is a face of $\Delta_{i}$ for $i\in\{0,\dots,n\}$ or the empty set with $\varepsilon_{i}=1$ if $i<n$.
We set
$|\1_{(F,\varepsilon)}|_{>s}=\sum_{i>s}(\dim F_{i}+\varepsilon_{i})$,
with $\varepsilon_{n}=0$.

\begin{definition}\label{def:degrepervers}
Let $\ell\in \{1,\ldots,n\}$.
The  \emph{$\ell$-perverse degree} of 
$\1_{(F,\varepsilon)}\in \tN^*(\Delta)$ is
$$
\|\1_{(F,\varepsilon)}\|_{\ell}=\left\{
\begin{array}{ccl}
-\infty&\text{if}
&
\varepsilon_{n-\ell}=1,\\
|\1_{(F,\varepsilon)}|_{> n-\ell}
&\text{if}&
\varepsilon_{n-\ell}=0.
\end{array}\right.$$
For a cochain $\omega = \sum_b\lambda_b \ \1_{(F_b,\varepsilon_b) }\in\tN^*(\Delta)$ with 
$\lambda_{b}\neq 0$ for all $b$,
the \emph{$\ell$-perverse degree} is
$$\|\omega \|_{\ell}=\max_{b}\|\1_{(F_b,\varepsilon_b)}\|_{\ell}.$$
By convention, we set $\|0\|_{\ell}=-\infty$.
\end{definition}

Let $(X,\ov{p})$ be a perverse space and $\sigma\colon \Delta=\Delta_{0}\ast\dots\ast\Delta_{n}\to X$  a regular simplex.
We set $\tN^*_{\sigma}=\tN^*(\Delta)$.
Let $\delta_{\ell}\colon \Delta' 
\to\Delta$  be  the inclusion of a face, we set
$\partial_{\ell}\sigma=\sigma\circ\delta_{\ell}\colon \Delta'\to X$ with the induced filtration 
$\Delta'=\Delta'_{0}\ast\dots\ast\Delta'_{n}$.

The \emph{blown-up complex} of $X$ is the cochain complex 
$\tN^*(X;R)$
composed of the elements $\omega$
associating to each  regular simplex
 $\sigma\colon \Delta_{0}\ast\dots\ast\Delta_{n}\to X$
an element
 $\omega_{\sigma}\in  \tN^*_{\sigma}$  
such that $\delta_{\ell}^*(\omega_{\sigma})=\omega_{\partial_{\ell}\sigma}$,
for any  regular face operator
 $\delta_{\ell}\colon\Delta'\to\Delta$.
 The differential $d \omega$ is defined by
 $(d \omega)_{\sigma}=d(\omega_{\sigma})$.
 The \emph{perverse degree of $\omega$ along a singular stratum $S$} equals
 $$\|\omega\|_S=\sup\left\{
 \|\omega_{\sigma}\|_{\codim S}\mid \sigma\colon \Delta\to X \;
 \text{regular such that }
 \sigma(\Delta)\cap S\neq\emptyset
 \right\}.$$
 By setting $\|\omega\|_{S}=0$ for any regular stratum $S$, we get a map
 $\|\omega\|\colon \cS\to \N$.
 
 \begin{definition}\label{def:blowup}
 A \emph{cochain $\omega\in \tN^*(X;R)$  is $\ov{p}$-allowable} if $\| \omega\|\leq \ov{p}$
 and of  \emph{$\ov{p}$-intersection} if $\omega$ and $d\omega$ are $\ov{p}$-allowable. 
 We denote $\tN^*_{\ov{p}}(X;R)$
 the complex of $\ov{p}$-intersection cochains and  
 $\crH^*_{\ov{p}}(X;R)$ 
 its homology, called
 \emph{blown-up $\ov{p}$-intersection cohomology}  of $X$.
\end{definition}

Let us recall  its main properties.
First, the canonical projection 
 $\pr \colon X\times\mathbb{R}\rightarrow X$ induces  an isomorphism (\cite[Theorem D]{CST4})
\begin{equation}\label{pro}
\pr^* \colon \crH^*_{\ov{p}}(X;R)\rightarrow\crH^*_{\ov{p}}(X\times \R;R).
\end{equation}
Also, if $L$ is a compact pseudomanifold and  $\overline p$  a perversity on the cone $\rc L$,  inducing
$\ov{p}$ on $L$, we have  \cite[Theorem E]{CST4}:
\begin{equation}\label{Cone}
\crH^*_{\ov{p}}(\rc L;R)
=\begin{cases}
\crH^*_{\ov{p}}(L;R),
& \text{if }k\leq\ov p(\tv),\\
0 & \text{if }k>\ov p(\tv),
\end{cases}
\end{equation}
where  $\tv$ is the apex of the cone.
If $k\leq\ov p(\tv)$, the isomorphism
$\crH^*_{\ov{p}}(\rc L;R)\cong \crH^*_{\ov{p}}(L;R)$
 is given by the inclusion $L \times ]0,1[ = \rc L \backslash\{\tv\} \hookrightarrow \rc L$.

\begin{definition}\label{def:Upetit}
Let $\cU$ be an open cover of $X$.
A \emph{$\cU$-small simplex} is a regular simplex, $\sigma\colon \Delta=\Delta_{0}\ast\dots\ast\Delta_{n}\to X$,
 such that there exists $U\in\cU$ with $\im\sigma\subset U$. 
 The \emph{blown-up  complex of $\cU$-small cochains of $X$
 with coefficients in $R$,} written 
 $\tN^{*,\cU}(X;R)$
is the cochain complex made up of elements  $\omega$, associating to any $\cU$-small simplex,
 $\sigma\colon\Delta= \Delta_{0}\ast\dots\ast\Delta_{n}\to X$,
an element
 $\omega_{\sigma}\in \Hiru \tN*\Delta$,  
 so that $\delta_{\ell}^*(\omega_{\sigma})=\omega_{\partial_{\ell}\sigma}$,
for any face operator,
 $\delta_{\ell}\colon \Delta'_{0}\ast\dots\ast\Delta'_{n}\to \Delta_{0}\ast\dots\ast\Delta_{n}$, with $\Delta'_{n}\neq\emptyset$. 
 If $\ov{p}$ is a perversity on $X$, we denote by 
 $\tN^{*,\cU}_{\ov{p}}(X;R)$
 the complex of 
 $\cU$-small cochains verifying
$ \|\omega\|\leq \ov{p}$ and
$\|\delta \omega\|\leq\ov{p}$.
\end{definition}

\begin{proposition}{\cite[Corollary 9.7]{CST4}}\label{cor:Upetits}
The restriction map is a quasi-isomor\-phism,\newline
$\rho_{\cU}\colon \lau \tN{*}{\ov{p}}{X;R}
\to \lau\tN{*,\cU}{\ov{p}}{X;R}$.
\end{proposition}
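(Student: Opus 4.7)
The proof follows the classical pattern of the ``small simplices'' theorem in singular cohomology, adapted to the blown-up setting. The plan is to construct a subdivision operator on regular simplices that (a) respects the join decomposition underlying the blown-up complex, (b) is chain-homotopic to the identity by an explicit homotopy, (c) preserves the perverse degree along each stratum, and (d) can be iterated to make any regular simplex $\cU$-small.

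First I would define a subdivision $\sd$ on the simplicial model of a regular simplex $\Delta=\Delta_0\ast\dots\ast\Delta_n$. Since the blown-up complex uses $\tN^*(\Delta)=N^*(\tc\Delta_0)\otimes\dots\otimes N^*(\tc\Delta_{n-1})\otimes N^*(\Delta_n)$, the subdivision must be compatible with this tensor structure: I would subdivide each factor $\Delta_i$ (for $i<n$) together with its cone $\tc\Delta_i$, and the last factor $\Delta_n$ by ordinary barycentric subdivision, then take joins. This produces a chain map on the universal model, and by naturality, a cochain operator $\sd^\ast\colon \tN^\ast(X;R)\to \tN^\ast(X;R)$. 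The standard prism-type construction gives a cochain homotopy $T$ with $\sd^\ast-\id=dT+Td$, built from the cones of the subdivisions appearing on each join factor.

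The key technical step is to check that if $\omega$ has $\|\omega\|_S\leq \ov{p}(S)$ on every stratum $S$, then the same holds for $\sd^\ast\omega$ and for $T\omega$. This reduces, via \defref{def:degrepervers}, to a combinatorial statement about how the $(n-\ell)$-th face summand of $|\1_{(F,\varepsilon)}|_{>n-\ell}$ behaves under the tensor-factor-wise subdivision: because the subdivision of each factor $\tc\Delta_i$ stays inside that factor and does not create new $\varepsilon_{n-\ell}=0$ components with larger $|\cdot|_{>n-\ell}$ degree, the $\ell$-perverse degree is preserved. This is the step I expect to be the main obstacle, because one has to unwind the definition of $\|\cdot\|_\ell$ carefully and verify it both for generators $\1_{(F,\varepsilon)}$ and for linear combinations, and check stability under the homotopy $T$ which involves one extra cone direction.

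Once $\sd^\ast$ and $T$ are known to restrict to $\tN^\ast_{\ov{p}}(X;R)$, I would use a Lebesgue-type argument: for any regular simplex $\sigma\colon\Delta\to X$, since $\Delta$ is compact and $\{\sigma^{-1}(U)\}_{U\in\cU}$ is an open cover, there exists $m(\sigma)\in\N$ such that every simplex appearing in $\sd^{m(\sigma)}\Delta$ maps into some $U\in\cU$. Following the classical construction (as in Hatcher for singular cohomology), I would build, using the iterated homotopy $D_m=\sum_{k=0}^{m-1}T\circ (\sd^\ast)^k$ and the local choices $m(\sigma)$, an operator $E\colon \tN^{\ast,\cU}_{\ov{p}}(X;R)\to \tN^\ast_{\ov{p}}(X;R)$ together with a cochain homotopy showing $E\circ\rho_\cU\simeq\id$ and $\rho_\cU\circ E\simeq\id$. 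The compatibility with face operators (which is needed for $E\omega$ to actually define an element of $\tN^\ast$) follows from the naturality of the subdivision with respect to regular face maps, and the perverse-degree bounds are preserved by step~(c). This yields the desired quasi-isomorphism.
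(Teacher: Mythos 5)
Note first that the paper does not prove this statement itself: it is imported verbatim from \cite[Corollary 9.7]{CST4}, where the proof is indeed of the ``small simplices'' type you outline (a subdivision operator on blown-up cochains, an explicit homotopy, control of perverse degrees, and an assembly of a homotopy inverse to $\rho_{\cU}$). At that level your plan matches the reference. But your first step contains a concrete error that propagates to the end. You choose to subdivide $\Delta=\Delta_0\ast\dots\ast\Delta_n$ factor by factor and ``then take joins,'' precisely so that the operation visibly respects the tensor decomposition $\tN^*(\Delta)=N^*(\tc\Delta_0)\otimes\dots\otimes N^*(\Delta_n)$. The simplices of that subdivision are joins $F_0\ast\dots\ast F_n$ with $F_i$ a simplex of the subdivided $\Delta_i$, and the diameter of such a join is bounded below by the distance between $\Delta_0$ and $\Delta_n$ inside $\Delta$, however fine the $F_i$ are. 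So the mesh of the iterated factorwise subdivision does \emph{not} tend to $0$, and the Lebesgue step of your last paragraph --- ``there exists $m(\sigma)$ such that every simplex of the $m(\sigma)$-fold subdivision maps into some $U\in\cU$'' --- is false for this subdivision. The whole construction of $E$ then collapses, since a non-small simplex never becomes small.

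If you repair this by using a genuine subdivision of the whole join (e.g.\ barycentric), you run into the difficulty your choice was designed to avoid, and which is the actual content of the cited proof: a simplex $\Delta'$ of such a subdivision, filtered by $\sigma^{-1}X_i$, is again regular, but its join factors $\Delta'_i$ are \emph{not} faces of the $\Delta_i$, so there is no evident simplicial comparison between $\tN^*(\Delta')$ and $\tN^*(\Delta)$. In particular your phrase ``by naturality, a cochain operator $\sd^*$'' does not apply: a blown-up cochain is not a functional on chains but a compatible family $\omega_\sigma\in\tN^*_\sigma$, so one cannot simply dualize a chain-level subdivision; one must construct by hand a morphism from the blown-up complex of the subdivided simplex to $\tN^*(\Delta)$, a homotopy, and the perverse-degree estimates for both --- this is where the work of \cite{CST4} lies. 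Your step on preservation of $\|\cdot\|_\ell$ and the face-compatibility of $E$ are the right things to check afterwards, but as written they rest on an operator that either does not shrink simplices or has not been defined.
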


Finally, the blown-up intersection cohomology satisfies the
  Mayer-Vietoris property.

\begin{proposition}{\cite[Theorem C]{CST4}}\label{thm:MVcourte}
Let $(X,\ov{p})$ be a paracompact perverse space, 
endowed with an open cover  $\cU =\{W_{1},W_{2}\}$
and a subordinated partition of the unity, $(f_{1},f_{2})$. 
For $i=1,\,2$, we denote by $\cU_{i}$ the cover of  $W_{i}$ consisting of the open subsets
$(W_{1}\cap W_{2}, f_{{i}}^{-1}(]1/2,1])$ and by $\cU$ the cover of  $X$, union of the covers  $\cU_{i}$.
Then, the canonical inclusions, $W_{i}\subset X$ and $W_{1}\cap W_{2}\subset W_{i}$, induce a short exact sequence,
where $\varphi(\omega_{1},\omega_{2})=\omega_{1}-\omega_{2}$,
$$
\xymatrix@C=4mm{
0\ar[r]&
\tN^{*,\cU}_{\ov{p}}(X;R)\ar[r]
&
\tN^{*,\cU_{1}}_{\ov{p}}(W_{1};R)
\oplus
\tN^{*,\cU_{2}}_{\ov{p}}(W_{2};R)
\ar[r]^-{\varphi}&
\tN^{*}_{\ov{p}}(W_{1}\cap W_{2};R)\ar[r]&
0.
}$$
\end{proposition}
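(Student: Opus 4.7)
The plan is to verify exactness at each of the three positions. I would first unpack the cover: setting $A_i := f_i^{-1}(]1/2,1])$, the relations $f_1+f_2=1$ and $\text{supp}(f_j)\subseteq W_j$ force $A_1\cap A_2 = \emptyset$ and $W_i = (W_1\cap W_2)\cup A_i$, so that $\cU = \{W_1\cap W_2, A_1, A_2\}$ and every element of $\cU$ lies in $W_1$ or $W_2$. Injectivity of the restriction map is then immediate, since any $\cU$-small regular simplex $\sigma$ has $\im\sigma \subseteq U$ for some $U\in \cU$, hence $\im\sigma \subseteq W_1$ or $\im\sigma\subseteq W_2$, so $\omega_\sigma$ is recovered from the pair of restrictions. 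For exactness at the middle term, given $(\omega_1,\omega_2)$ with matching restrictions on $W_1\cap W_2$, I would glue them to $\omega\in\tN^{*,\cU}_{\ov p}(X;R)$ by setting $\omega_\sigma = (\omega_i)_\sigma$ whenever $\im \sigma\subseteq W_i$; the overlap is handled by the agreement hypothesis, and face-compatibility together with the perverse-degree bounds transfer automatically from the $\omega_i$.

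The substance of the proof is surjectivity of $\varphi$. Given $\omega\in \tN^*_{\ov p}(W_1\cap W_2;R)$, my aim is to take $\omega_2 = 0$ and construct $\omega_1\in\tN^{*,\cU_1}_{\ov p}(W_1;R)$ restricting to $\omega$ on $W_1\cap W_2$. A $\cU_1$-small simplex $\sigma\colon\Delta\to W_1$ either has $\im\sigma\subseteq W_1\cap W_2$---forcing $(\omega_1)_\sigma = \omega_\sigma$---or $\im\sigma\subseteq A_1$, where I have freedom. The delicate case is a $\sigma$ with $\im\sigma\subseteq A_1$ having some but not all points in $W_2$: proper faces may land in $W_1\cap W_2$, and face-compatibility prescribes $\delta^*(\omega_1)_\sigma$ on those faces to equal the already-fixed value $\omega_{\sigma\circ\delta}$. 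Since the face restrictions $\delta^*\colon\tN^*_\sigma\to\tN^*_{\sigma\circ\delta}$ coming from the tensor structure $N^*(\tc\Delta_0)\otimes\cdots\otimes N^*(\Delta_n)$ are surjective and admit linear sections, a compatible lift exists on each individual simplex, and a globally coherent extension can be assembled by induction on $\dim\sigma$.

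The main obstacle I foresee is that the extension $\omega_1$ must also satisfy the perverse-degree bounds $\|\omega_1\|\leq \ov p$ and $\|d\omega_1\|\leq \ov p$. On simplices meeting $W_1\cap W_2$ these are inherited from $\omega$, but on simplices with $\im\sigma\subseteq A_1\setminus W_2$ the inductive lift must be chosen carefully so as not to create new contributions to the $\ell$-perverse degree (as defined in \defref{def:degrepervers}) along the singular strata encountered in $A_1\setminus W_2$. I would expect this bookkeeping to be the subtlest part of the argument, and to require either a systematic choice of section in the tensor complex on each simplex, or a more global construction that exploits the partition of unity beyond its use in defining $\cU$. Once such an extension has been produced, the pair $(\omega_1, 0)$ maps under $\varphi$ to $\omega$, yielding surjectivity and completing the short exact sequence.
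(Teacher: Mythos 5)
This proposition is imported verbatim from \cite[Theorem C]{CST4}; the present paper contains no proof of it, so your attempt can only be judged against that reference and against its own internal completeness. Your treatment of injectivity and of exactness at the middle term is correct and is the standard formal part of the argument: writing $A_i=f_i^{-1}(]1/2,1])$, the relations $f_1+f_2=1$ and $\mathrm{supp}(f_i)\subseteq W_i$ give $A_1\cap A_2=\emptyset$ and $W_i=(W_1\cap W_2)\cup A_i$, so every $\cU$-small simplex lies in some $W_i$, the gluing of $(\omega_1,\omega_2)$ is well defined, and the perverse bounds are checked simplexwise.

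The gap is in the surjectivity of $\varphi$, which is the entire non-formal content of the statement. You reduce it to extending $\omega\in\tN^*_{\ov{p}}(W_1\cap W_2)$ to some $\omega_1\in\tN^{*,\cU_1}_{\ov{p}}(W_1)$, and you correctly locate the difficulty at simplices $\sigma$ with $\im\sigma\subseteq A_1$ and $\im\sigma\not\subseteq W_2$, some of whose regular faces land in $W_1\cap W_2$ and therefore carry prescribed values. But you then assert, without proof, that (a) a face-compatible lift exists on each such simplex and can be assembled coherently by induction on dimension, and (b) the perverse-degree bounds can be preserved. Point (a) requires showing that the restriction of $\tN^*(\Delta)$ to the inverse limit of the $\tN^*(\Delta')$ over all proper regular faces $\Delta'$ is surjective --- plausible for the tensor complex $N^*(\tc\Delta_0)\otimes\dots\otimes N^*(\Delta_n)$, but neither automatic nor argued. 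Point (b) you explicitly leave open, and it is not mere bookkeeping: an arbitrary section may introduce components $\1_{(F,\varepsilon)}$ of large $\ell$-perverse degree along singular strata that $\sigma$ meets only in $A_1\menos W_2$, and one must simultaneously control $\|d\omega_1\|$, which couples the choices made on a simplex and on its codimension-one faces. Note also that by taking $\omega_2=0$ you are committing to the strictly stronger claim that the single restriction $\tN^{*,\cU_1}_{\ov{p}}(W_1)\to\tN^*_{\ov{p}}(W_1\cap W_2)$ is surjective; the hypothesis of a partition of unity subordinate to $\{W_1,W_2\}$ --- which your argument never uses beyond defining the covers --- strongly suggests that the intended construction distributes $\omega$ between the two summands rather than loading it entirely onto $W_1$. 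As it stands, the proof is incomplete at its decisive step.
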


\section{Borel-Moore intersection homology}\label{sec:BM}

In a filtered space $X$, locally finite chains are sums,  perhaps infinite,
$\xi=\sum_{j\in J}\lambda_{j}\sigma_{j}$,
such that every point $x\in X$ has a  neighborhood $U_{x}$
for which all but a finite number of the regular simplices $\sigma_{j}$
(see \defref{def:regularsimplex})  with support intersecting $U_{x}$
have a coefficient $\lambda_{j}$ equal to 0.

\begin{definition}\label{def:BM}
Let $(X,\ov{p})$ be a perverse space.
We denote by 
$\gC^{\infty,\ov{p}}_{*}(X;R)$
the complex of  locally finite   chains of $\ov{p}$-intersection
with the differential $\gd$.
Its homology, 
$\gH^{\infty,\ov{p}}_{*}(X;R)$,
is called \emph{the locally finite (or Borel-Moore) $\ov p$-intersection homology.}
\end{definition}

Recall  a characterization of locally finite   $\ov{p}$-intersec\-tion chains.

\begin{proposition}{\cite[Proposition 3.4]{CST5}}\label{prop:BMprojectivelim}
Let $(X,\ov{p})$ be a perverse space.
Suppose that $X$ is locally compact, metrizable and separable.
Then, the complex of locally finite  $\ov{p}$-intersection chains
 is isomorphic to the inverse limit of complexes,
 $$\gC^{\infty,\ov{p}}_{*}(X;R) \cong \varprojlim_{K\subset X}\gC^{\ov{p}}_{*}(X,X\menos K;R),
 $$
where the limit is taken over all compact subsets of $X$.
\end{proposition}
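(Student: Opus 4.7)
The plan is to construct an explicit isomorphism of chain complexes by restriction to finite pieces and then invert it using an exhaustion of $X$ by compacta. Given a locally finite chain $\xi=\sum_{j\in J}\lambda_{j}\sigma_{j}$ and a compact $K\subset X$, the combination of local finiteness of $\xi$ and compactness of $K$ forces only finitely many indices $j$ to satisfy $\sigma_{j}(\Delta)\cap K\neq\emptyset$; I write $\xi|_{K}$ for the resulting finite sub-chain, and define
\[
\Phi\colon \gC^{\infty,\ov{p}}_{*}(X;R)\to \varprojlim_{K}\gC^{\ov{p}}_{*}(X,X\menos K;R),\qquad \Phi(\xi)_{K}:=[\xi|_{K}].
\]
Compatibility under $K\subset K'$ is immediate because $\xi|_{K'}-\xi|_{K}$ is supported in $X\menos K$. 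Writing $\xi=\xi|_{K}+\xi'_{K}$ with $\xi'_{K}$ locally finite and supported in $X\menos K$, the boundary $\gd\xi'_{K}$ again sits in $X\menos K$, so $\gd(\xi|_{K})-(\gd\xi)|_{K}$ is a \emph{finite} chain supported in $X\menos K$ and hence vanishes in the relative complex; this shows $\Phi$ commutes with $\gd$.

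Injectivity is direct: if $\Phi(\xi)=0$, then for each compact $K$ the chain $\xi|_{K}$, whose simplices by construction all meet $K$, must at the same time lie in the subcomplex of chains supported in $X\menos K$, forcing $\xi|_{K}=0$. Applied to $K=\sigma_{j}(\Delta)$ for any $j\in J$, this gives $\lambda_{j}=0$ and hence $\xi=0$.

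For surjectivity I would use the hypotheses via \remref{rem:plentyofdef}: locally compact plus metrizable plus separable implies hemicompact, so there is an exhaustion $X=\bigcup_{n}U_{n}$ with $K_{n}:=\ov{U}_{n}$ compact and $K_{n}\subset U_{n+1}\subset K_{n+1}$. Given a compatible family $(\xi_{K})_{K}$, I pick representatives $c_{n}\in \gC^{\ov{p}}_{*}(X;R)$ of $\xi_{K_{n}}$; compatibility yields $d_{n}:=c_{n+1}-c_{n}\in \gC^{\ov{p}}_{*}(X\menos K_{n};R)$, and I set
\[
\xi:=c_{1}+\sum_{n\geq 1}d_{n}.
\]
Any $x\in X$ lies in some $U_{n_{0}}$; for $n\geq n_{0}$ the chain $d_{n}$ is supported in $X\menos K_{n}\subset X\menos K_{n_{0}}$, so only the finite chains $c_{1},d_{1},\dots,d_{n_{0}-1}$ contribute simplices meeting a small enough neighborhood of $x$. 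This yields both local finiteness of $\xi$ and the finiteness of the coefficient sum at each simplex (a simplex appearing in infinitely many $d_{n}$'s would have image in $\bigcap_{n}(X\menos K_{n})=\emptyset$). The same reasoning applied to $\gd\xi=\gd c_{1}+\sum_{n}\gd d_{n}$ shows $\xi\in \gC^{\infty,\ov{p}}_{*}(X;R)$. For any compact $K$, hemicompactness gives $n$ with $K\subset K_{n}$, and $\xi-c_{n}=\sum_{m\geq n}d_{m}$ is supported in $X\menos K$, so $\Phi(\xi)_{K}=[c_{n}]=\xi_{K}$.

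The main obstacle is this last construction: one must argue that the formally defined $\xi$ is simultaneously locally finite, a bona fide $\ov{p}$-intersection chain, and has boundary enjoying the same property. Everything hinges on the nesting $d_{n}\in\gC^{\ov{p}}_{*}(X\menos K_{n};R)$ combined with the hemicompact exhaustion, so once the exhaustion is in hand the bookkeeping is unavoidable but routine; the chain-map and injectivity parts are purely formal.
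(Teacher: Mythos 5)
Your map $\Phi$ is not well defined, and this is exactly where the real content of the proposition sits. The class $[\xi|_{K}]$ only makes sense in $\gC^{\ov{p}}_{*}(X,X\menos K;R)=\gC^{\ov{p}}_{*}(X;R)/\gC^{\ov{p}}_{*}(X\menos K;R)$ if the truncation $\xi|_{K}$ is itself a $\ov{p}$-intersection chain, i.e.\ if $\gd(\xi|_{K})$ is $\ov{p}$-allowable. This fails in general: a non-allowable regular face $\tau$ of a simplex of $\xi$ meeting $K$ may cancel in $\gd\xi$ only against faces of simplices that do \emph{not} meet $K$, and then $\tau$ survives in $\gd(\xi|_{K})$ with nonzero coefficient. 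Concretely, take $X=\R^{2}$ filtered by $X_{0}=X_{1}=\{0\}$ with $\ov{p}(\{0\})=0$, and let $\xi$ be a locally finite triangulation of $\R^{2}$ having $0$ as a vertex. Each triangle with a vertex at $0$ is $\ov{p}$-allowable ($0\leq 2-2+0$), the radial edges emanating from $0$ are regular but not allowable ($0\not\leq 1-2+0$), and these edges cancel in $\gd\xi$ only between adjacent triangles of the fan around $0$. If $K$ is a point in the interior of one such triangle $T_{1}$, then $\xi|_{K}=T_{1}$ and $\gd T_{1}$ contains two non-allowable edges, so $\xi|_{K}\notin\gC^{\ov{p}}_{*}(X;R)$ and $\Phi(\xi)_{K}$ is undefined. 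The same defect undermines your verification that $\Phi$ is a chain map: you quotient by a chain that is merely \emph{supported} in $X\menos K$, without knowing it lies in $\gC^{\ov{p}}_{*}(X\menos K;R)$, and without knowing that the chains being compared are intersection chains at all.

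The opposite direction of your argument---assembling $\xi=c_{1}+\sum_{n}d_{n}$ from a compatible family via a hemicompact exhaustion---is sound and produces a well-defined, injective chain map from the inverse limit into $\gC^{\infty,\ov{p}}_{*}(X;R)$ (independence of the representatives $c_{n}$ follows by a telescoping argument on the coefficient of each fixed simplex, and injectivity because $c_{n}=\xi-\sum_{m\geq n}d_{m}$ forces $c_{n}$ into $\gC^{\ov{p}}_{*}(X\menos K_{n};R)$ when $\xi=0$). But surjectivity of that map is precisely the statement you have assumed for free: for every locally finite intersection chain $\xi$ and every compact $K$ one must produce a \emph{finite} chain $c\in\gC^{\ov{p}}_{*}(X;R)$---not merely a finite allowable chain---with $\xi-c$ supported in $X\menos K$. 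In the fan example this forces $c$ to contain the whole fan of triangles around $0$, not just those meeting $K$, and in general one must show that the naive truncation can always be ``closed up'' by a finite correction so that its $\gd$ becomes allowable. That is the key lemma of the proof, not routine bookkeeping, and as written your argument begs it.
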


Since locally finite chains in an open subset of $X$ are not necessarily locally finite in $X$, the 
complex $\gC^{\infty,\ov{p}}_{*}(-;R)$ is not functorial for the inclusions of open subsets. To get round this defect, 
we introduce a contravariant chain complex as in \cite{Bre}. In the context of intersection homology a similar
approach is done by Friedman (see  \cite[Section 2.3.2]{MR2276609}), the sheafification of the resulting complex 
being nothing but Deligne's sheaf of \cite{GM2}. 
So we set
$$\gC^{\infty,X,\ov{p}}_{*}(U;R):=
 \varprojlim_{K\subset U} 
  \gC^{\ov{p}}_{*}(X,X\backslash K;R),
 $$
 where $K$ runs over the family of compact subsets of $U$.
An element   
$\alpha\in \gC^{\infty,X,\ov{p}}_{*}(U;R)$ 
is  a family $\alpha=\langle\alpha_K\rangle_{K}$, indexed by  the family of compacts of $U$, 
with $\alpha_K \in \gC_{*}^{\ov{p}}(X;R)$ and $\alpha_{K'}  - \alpha_{K} \in \gC_{*}^{\ov{p}}(X\menos K;R)$,
if $K \subset K'$. In particular, $\alpha=0$ if, and only if, $\alpha_K \in\gC_{*}^{\ov{p}}(X\menos K;R)$  
for every $K$.

\medskip
 For the construction of the projective limit, an exhaustive family of compacts suffices. Therefore,
if $X$ is hemicompact, we may use a numerable increasing sequence of compacts, $(K_{i})_{i\in\N}$, and get
$\alpha=\langle \alpha_{i}\rangle_{i}$,
with $\alpha_{i}\in \gC_{*}^{\ov{p}}(X;R)$ and $\alpha_{i+1}  - \alpha_{i} \in \gC_{*}^{\ov{p}}(X\menos K_{i};R)$.

\medskip
 Given two open subsets $V \subset U \subset X$, we denote by 
 \begin{equation}\label{equa:maps2}
 I^{X,\ov{p}}_{V,U}\colon
 \varprojlim_{K\subset U}
  \gC^{\ov{p}}_{*}(X,X\backslash K;R)
  \to  \varprojlim_{K\subset V}
   \gC^{\ov{p}}_{*}(X,X\backslash K;R)
 \end{equation}
  the map induced by the identity.
 So, the complex 
 $\gC^{\infty,X,\ov{p}}_{*}(-;R)$
 defines a contravariant functor  from the poset of open subsets  of $X$.
 Moreover, this is an appropriate substitute for the study of locally finite  $\ov p$-intersec\-tion homology
  as shows the following result.

\begin{proposition}\label{prop:BMprojectivelimU}
Let $(X,\ov{p})$ be a locally compact, second-countable 
perverse space and $U \subset X$ an open subset. The natural restriction 
$I_{U}^{\ov{p}}\colon 
\gC^{\infty,\ov{p}}_{*}(U;R)
\to
\gC^{\infty,X,\ov{p}}_{*}(U;R)$
is a quasi-isomorphism.
\end{proposition}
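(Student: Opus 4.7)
My plan is to realize both complexes as inverse limits indexed by a countable cofinal family of compacts of $U$, recognize $I_{U}^{\ov{p}}$ as the inverse limit of a pointwise quasi-isomorphism coming from excision, and then commute inverse limit with homology via the Milnor $\varprojlim^{1}$--sequence combined with the five lemma.

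First, I would exploit the hemicompactness of $U$. Since $X$ is locally compact and second-countable, the open subset $U$ inherits both properties (second-countability being hereditary), hence $U$ is hemicompact and metrizable by \remref{rem:plentyofdef}. Pick an increasing sequence of compacts $(K_{i})_{i\in\N}$ of $U$ with $K_{i}\subset {\rm int}(K_{i+1})$ and $U=\bigcup_{i}K_{i}$. Such a sequence is cofinal in the directed set of all compacts of $U$, so
$$
\gC^{\infty,X,\ov{p}}_{*}(U;R)=\varprojlim_{i\in\N}\gC^{\ov{p}}_{*}(X,X\menos K_{i};R).
$$
On the other hand, $U$ is itself locally compact, metrizable and separable, so \propref{prop:BMprojectivelim} applied to $U$ provides
$$
\gC^{\infty,\ov{p}}_{*}(U;R)\cong\varprojlim_{i\in\N}\gC^{\ov{p}}_{*}(U,U\menos K_{i};R),
$$
and under these identifications $I_{U}^{\ov{p}}$ becomes the inverse limit of the chain maps $\iota_{i}\colon\gC^{\ov{p}}_{*}(U,U\menos K_{i};R)\to\gC^{\ov{p}}_{*}(X,X\menos K_{i};R)$ induced by the inclusions of pairs $(U,U\menos K_{i})\hookrightarrow(X,X\menos K_{i})$.

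Second, each $\iota_{i}$ is a quasi-isomorphism by the excision statement recalled in the paper just after \defref{def:lessimplexes} (\cite[Corollary 4.5]{CST3}). Moreover, in each fixed degree $n$, the transition maps $\gC^{\ov{p}}_{n}(U,U\menos K_{i+1};R)\to\gC^{\ov{p}}_{n}(U,U\menos K_{i};R)$ are surjective, being quotients by the inclusion of subcomplexes $\gC^{\ov{p}}_{n}(U\menos K_{i+1};R)\subset\gC^{\ov{p}}_{n}(U\menos K_{i};R)$, and likewise for the tower built from $X$. Hence both towers of chain complexes are Mittag-Leffler at the chain level.

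Third, I would invoke the Milnor $\varprojlim^{1}$ exact sequence. For either tower $(C^{i}_{*})$ of the two above, it yields a short exact sequence
$$
0\to {\varprojlim}^{1}_{i} H_{k+1}(C^{i}_{*})\to H_{k}(\varprojlim_{i} C^{i}_{*})\to \varprojlim_{i} H_{k}(C^{i}_{*})\to 0.
$$
The quasi-isomorphisms $\iota_{i}$ produce an isomorphism of the associated towers of homology groups, hence induce isomorphisms on both outer terms of these sequences, and the five lemma then forces $H_{k}(I_{U}^{\ov{p}})$ to be an isomorphism for every $k$. The main subtlety is the $\varprojlim^{1}$ term, which has no reason to vanish for either tower; the crucial observation is that one does not need vanishing, only the functorial identification of $\varprojlim^{1}$ under a morphism of towers, which is automatic once each $\iota_{i}$ induces an isomorphism on homology.
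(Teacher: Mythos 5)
Your proposal is correct and follows essentially the same route as the paper: identify both complexes as inverse limits over a countable cofinal exhaustion of $U$ by compacts, observe that the degreewise surjectivity of the transition maps gives the Mittag-Leffler condition needed for the Milnor $\varprojlim^{1}$ exact sequences (Weibel, Prop.\ 3.5.8), and conclude from the excision quasi-isomorphisms $\gC^{\ov{p}}_{*}(U,U\menos K_{i})\to\gC^{\ov{p}}_{*}(X,X\menos K_{i})$ via the resulting morphism of short exact sequences and the five lemma. Your explicit remark that one does not need the $\varprojlim^{1}$ terms to vanish, only to be identified functorially, is exactly the point the paper's diagram of short exact sequences is making.
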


\begin{proof}
Let $(K_{i})_{i\in\N}$ be a numerable increasing sequence of compacts of $U$, covering $U$
and cofinal in the family of compact subsets of $U$.
The maps
$\gC_{*}^{\ov{p}}(U,U\menos K_{i})\to \gC_{*}^{\ov{p}}(U,U\menos K_{i+1})$
and
$\gC_{*}^{\ov{p}}(X,X\menos K_{i})\to \gC_{*}^{\ov{p}}(X,X\menos K_{i+1})$
being surjective, these two sequences verify the Mittag-Leffler condition.
Thus the inclusions 
$
(\gC_{*}^{\ov{p}}(U,U\menos K_{i}))_{i}
\to
(\gC_{*}^{\ov{p}}(X,X\menos K_{i}))_{i}
$
give a morphism of short exact sequences (\cite[Proposition 3.5.8]{MR1269324}):
$$\xymatrix@C=4mm{
0\ar[r]&
\varprojlim^1_{i}\gH_{k+1}^{\ov{p}}(U,U\menos K_{i})
\ar[d]\ar[r]&
H_{k}(\varprojlim_{i}\gC_{*}^{\ov{p}}(U,U\menos K_{i}))
\ar[d]\ar[r]&
\varprojlim_{i}\gH_{k}^{\ov{p}}(U,U\menos K_{i})
\ar[d]\ar[r]&
0\\
0\ar[r]&
\varprojlim^1_{i}\gH_{k+1}^{\ov{p}}(X,X\menos K_{i})
\ar[r]&
H_{k}(\varprojlim_{i}\gC_{*}^{\ov{p}}(X,X\menos K_{i}))
\ar[r]&
\varprojlim_{i}\gH_{k}^{\ov{p}}(X,X\menos K_{i})
\ar[r]&
0.
}$$
The result is now a consequence of the excision property.
\end{proof}

The existence of a Mayer-Vietoris exact sequence in this context can be deduced from
a sheaf theoretic argument in the case of perversities depending only on the codimension of strata,
as mentioned in  \cite[Proof of Proposition 2.20]{MR2276609}. 
We provide below a direct proof
for general perversities.

\begin{theorem}\label{thm:MV}
Let $(X,\ov{p})$ be a  locally compact, second-countable perverse space
and $\{U,V\}$ an open covering of $X$. Then we have a Mayer-Vietoris exact sequence,
with coefficients in $R$,
\begin{equation}\label{equa:MV}
\xymatrix@C=4.9mm{
\dots \ar[r] &
\gH^{\infty,\ov{p}}_{k}(X)
\ar[r] &
\gH^{\infty,\ov{p}}_{k}(V)
\oplus
\gH^{\infty,\ov{p}}_{k}(U)
\ar[r] &
\gH^{\infty,\ov{p}}_{k}(U\cap V)
\ar[r] &
\gH^{\infty,\ov{p}}_{k-1}(X)
\ar[r] &
\dots
}
\end{equation}
\end{theorem}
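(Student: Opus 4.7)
My plan is to derive the Mayer--Vietoris exact sequence \ref{equa:MV} from a short exact sequence of chain complexes
\begin{equation*}
0 \to \gC^{\infty,X,\ov{p}}_{*}(X;R) \to \gC^{\infty,X,\ov{p}}_{*}(U;R) \oplus \gC^{\infty,X,\ov{p}}_{*}(V;R) \to \gC^{\infty,X,\ov{p}}_{*}(U\cap V;R) \to 0,
\end{equation*}
whose associated long exact sequence in homology, combined with \propref{prop:BMprojectivelimU}, gives the desired sequence. The essential advantage of the contravariant complex $\gC^{\infty,X,\ov{p}}_{*}(-;R)$ over $\gC^{\infty,\ov{p}}_{*}(-;R)$ is that restriction to open subsets is functorial, so the construction may be carried out as an inverse limit over compact exhaustions.

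Since $X$ is locally compact and second-countable, every open subset is hemicompact, and I fix compact exhaustions $(K_i^U)_i$ of $U$ and $(K_i^V)_i$ of $V$. A standard normality argument shows that any compact $K\subset X$ decomposes as $K = K_1\cup K_2$ with $K_1\subset U$ and $K_2\subset V$ compact, so $(K_i := K_i^U\cup K_i^V)_i$ is cofinal in the compacts of $X$ while $(L_i := K_i^U\cap K_i^V)_i$ is cofinal in those of $U\cap V$. For each $i$, the canonical projections fit into a short exact sequence of relative intersection chain complexes
\begin{equation*}
0 \to \gC^{\ov{p}}_{*}(X, X\menos K_i;R) \to \gC^{\ov{p}}_{*}(X, X\menos K_i^U;R) \oplus \gC^{\ov{p}}_{*}(X, X\menos K_i^V;R) \to Q_i \to 0,
\end{equation*}
where $Q_i := \gC^{\ov{p}}_{*}(X;R)/(\gC^{\ov{p}}_{*}(X\menos K_i^U;R) + \gC^{\ov{p}}_{*}(X\menos K_i^V;R))$. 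Injectivity uses the chain-level identity $\gC^{\ov{p}}_{*}(X\menos K_i^U)\cap \gC^{\ov{p}}_{*}(X\menos K_i^V) = \gC^{\ov{p}}_{*}(X\menos K_i)$, while middle-exactness and surjectivity are immediate; the Mayer--Vietoris (small-chains) theorem for intersection chains, applied to the open cover $\{X\menos K_i^U, X\menos K_i^V\}$ of $X\menos L_i$, then provides a natural quasi-isomorphism $Q_i\xrightarrow{\sim}\gC^{\ov{p}}_{*}(X, X\menos L_i;R)$.

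Passing to inverse limits over $i$, all structure maps are surjective quotient maps, so Mittag--Leffler holds and $\varprojlim_i$ remains exact; by cofinality, the first and middle limit terms are $\gC^{\infty,X,\ov{p}}_{*}(X;R)$ and $\gC^{\infty,X,\ov{p}}_{*}(U;R)\oplus \gC^{\infty,X,\ov{p}}_{*}(V;R)$, and the $\varprojlim^1$-exact sequence together with the level-wise quasi-isomorphism $Q_i\xrightarrow{\sim}\gC^{\ov{p}}_{*}(X, X\menos L_i;R)$ identifies $H_k(\varprojlim_i Q_i)$ with $\gH^{\infty,\ov{p}}_{k}(U\cap V;R)$, via \propref{prop:BMprojectivelim} and \propref{prop:BMprojectivelimU}. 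The long exact sequence in homology is then \ref{equa:MV}. The main technical obstacle is precisely this last step: the level-wise quasi-isomorphism need not persist after $\varprojlim$ without Mittag--Leffler on both $\gH^{\ov{p}}_{k}(X, X\menos L_i;R)$ and $H_k(Q_i)$; both follow from the surjectivity of the chain-level transition maps, via a five-lemma chase through the $\varprojlim^1$-sequences in the spirit of the proof of \propref{prop:BMprojectivelimU}.
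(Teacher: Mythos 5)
Your proposal is correct and follows essentially the same route as the paper: an inverse limit over compact exhaustions of level-wise short exact sequences of relative intersection chain complexes, with the third term identified with $\gC^{\ov{p}}_{*}(X,X\menos (K_i^U\cap K_i^V);R)$ via the Mayer--Vietoris quasi-isomorphism for intersection chains, and exactness of $\varprojlim$ guaranteed by Mittag--Leffler (which, as you note, holds at the chain level by surjectivity of the transition maps and then propagates through the $\varprojlim^1$-sequences and the five lemma). The only cosmetic difference is that the paper produces its exhaustions $\ov{U}_i,\ov{V}_i$ directly from hemicompactness of $U$ and $V$ rather than invoking a normality decomposition of an arbitrary compact of $X$.
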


\begin{proof}
As $U$ and $V$ are hemicompact, we choose sequences $(U_{i})_{i\in\N}$ and $(V_{i})_{i\in\N}$ of 
relatively compact open subsets
of $U$ and $V$, respectively, such that $\ov{U}_{i}\subset U_{i+1}$, $\cup_{i\in\N}U_{i}=U$
and
$\ov{V}_{i}\subset V_{i+1}$, $\cup_{i\in\N}V_{i}=V$.
Let us notice that $(\ov{U}_{i}\cup \ov{V}_{i})_{i\in\N}$ and $(\ov{U}_{i}\cap \ov{V}_{i})_{i\in\N}$
are  sequences of compact subsets such that  
$\ov{U}_{i}\cup \ov{V}_{i}\subset \ov{U}_{i+1}\cup \ov{V}_{i+1}$
and
$\ov{U}_{i}\cap \ov{V}_{i}\subset \ov{U}_{i+1}\cap \ov{V}_{i+1}$ 
which are exhaustive for $U\cup V$ and $U\cap V$ respectively.

As  observed in the proof of \propref{prop:BMprojectivelimU}, the sequences
$(\gC_{*}^{\ov{p}}(X,X\menos K_{i}))_{i\in\N}$
satisfy the Mittag-Leffler property, for $K_{i}=\ov{U}_{i}$, $\ov{V}_{i}$, $\ov{U}_{i}\cap \ov{V}_{i}$ or 
$\ov{U}_{i}\cup \ov{V}_{i}$.
Therefore, the short exact sequences
$$0\to
\gC_{*}^{\ov{p}}(X\menos (\ov{U}_{i}\cup \ov{V}_{i}))
\to
\gC_{*}^{\ov{p}}( X\menos \ov{U}_{i})\oplus
\gC_{*}^{\ov{p}}(X\menos \ov{V}_{i})
\to
\gC_{*}^{\ov{p}}(X\menos \ov{U}_{i})+
\gC_{*}^{\ov{p}}(X\menos \ov{V}_{i})
\to
0
$$
induce the short exact sequence
\begin{equation}\label{MV2}
 \def\objectstyle{\scriptstyle}
 \xymatrix@C=4mm{
 0\ar[r]&
\varprojlim_{i}\gC_{*}^{\ov{p}}(X, X\menos (\ov{U}_{i}\cup \ov{V}_{i}))
\ar[r]&
\varprojlim_{i}\gC_{*}^{\ov{p}}(X, X\menos \ov{U}_{i})\oplus
\varprojlim_{i}
\gC_{*}^{\ov{p}}(X, X\menos \ov{V}_{i})
\ar[r]&
\varprojlim_{i} \gQ_{*}^{\ov{p}}(X, \ov{U}_{i},\ov{V}_{i}))
\ar[r]&
0}
\end{equation}
with
$ \gQ_{*}^{\ov{p}}(X, \ov{U}_{i},\ov{V}_{i}))
=\gC_{*}^{\ov{p}}(X)/(\gC_{*}^{\ov{p}}(X\menos \ov{U}_{i})+
\gC_{*}^{\ov{p}}(X\menos \ov{V}_{i}))$.
The  long exact sequence associated to \eqref{MV2} gives \eqref{equa:MV}. Let us see that.

\medskip
$\bullet$ First, by \propref{prop:BMprojectivelimU}, we have
$\gH^{\infty,\ov{p}}_{*}(U\cup V)
\cong 
H_{k}(\varprojlim_{i}\gC_{*}^{\ov{p}}(X, X\menos (\ov{U}_{i}\cup \ov{V}_{i})))$,
$\gH^{\infty,\ov{p}}_{*}(U)
\cong 
H_{k}(\varprojlim_{i}\gC_{*}^{\ov{p}}(X, X\menos \ov{U}_{i}))$
and
$\gH^{\infty,\ov{p}}_{*}( V)
\cong 
H_{k}(\varprojlim_{i}\gC_{*}^{\ov{p}}(X, X\menos  \ov{V}_{i}))$.

\medskip
$\bullet$ As for the \emph{third term} of the sequence (\ref{MV2}), we proved 
 in \cite[Proposition 4.1]{CST3} that the identity map on $X$ induces a quasi-isomorphism,
$\gC^{\ov{p}}_{*}(X\menos \ov{U}_{i}) + \gC^{\ov{p}}_{*}(X\menos \ov{V}_{i}) 
\to
\gC^{\ov{p}}_{*}(X\menos (\ov{U}_{i}\cap \ov{V}_{i}))$. 
Therefore, it induces  a quasi-isomorphism
$$ \gQ_{*}^{\ov{p}}(X, \ov{U}_{i},\ov{V}_{i}))
\to
\gC^{\ov{p}}_{*}(X,X\menos (\ov{U}_{i}\cap \ov{V}_{i})).
$$
With the Mittag-Leffler property and \propref{prop:BMprojectivelimU},
the identity  map also gives a quasi-isomorphism
\begin{equation}\label{equa:MV3}
\psi\colon
\varprojlim_{i} \gQ_{*}^{\ov{p}}(X, \ov{U}_{i},\ov{V}_{i})
\to
\varprojlim_{i} \gC^{\ov{p}}_{*}(X,X\menos (\ov{U}_{i}\cap \ov{V}_{i}))
=
\gC^{\infty,X,\ov{p}}_{*}(U\cap V).
\end{equation}
\end{proof}

The following properties have been proven in \cite{CST5}.

\begin{proposition}{\cite[Proposition 3.5]{CST5}}\label{prop:foisR}
Let $(L,\ov{p})$ be a compact perverse space. Then we have
$$\gH^{\infty,\ov{p}}_{k}(\R^m\times L;R)
=
\gH_{k-m}^{\ov{p}}(L;R).$$
\end{proposition}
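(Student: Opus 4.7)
The plan is to compute the left-hand side directly from the projective-limit description of \propref{prop:BMprojectivelim}. Since $L$ is compact, the family $K_{i}=\ov{B}_{i}\times L$, with $\ov{B}_{i}$ the closed Euclidean ball of radius $i$ in $\R^m$, is a cofinal sequence of compact subsets of $\R^m\times L$. Hence
\[
\gH^{\infty,\ov{p}}_{k}(\R^m\times L;R)\;\cong\;H_{k}\bigl(\varprojlim_{i}\gC^{\ov{p}}_{*}(\R^m\times L,\R^m\times L\menos K_{i};R)\bigr).
\]
The transition maps of this inverse system are surjective quotients, so the Mittag-Leffler condition holds and, as in the proof of \propref{prop:BMprojectivelimU}, $H_{k}$ commutes with $\varprojlim_{i}$.

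It remains to identify the relative group $\gH^{\ov{p}}_{k}(\R^m\times L,(\R^m\menos\ov{B}_{i})\times L;R)$ with $\gH^{\ov{p}}_{k-m}(L;R)$, naturally in $i$. I would do this by a long exact sequence chase. The projection $\R^m\times L\to L$ is a stratified homotopy equivalence, so $\gH^{\ov{p}}_{*}(\R^m\times L)\cong\gH^{\ov{p}}_{*}(L)$; a radial deformation retraction identifies $\gH^{\ov{p}}_{*}((\R^m\menos\ov{B}_{i})\times L)$ with $\gH^{\ov{p}}_{*}(S^{m-1}\times L)$. A Mayer-Vietoris argument applied to the cover of the unfiltered sphere $S^{m-1}$ by its two open hemispheres (each a contractible factor whose product with $L$ has the intersection homology of $L$) yields the K\"unneth-type splitting
\[
\gH^{\ov{p}}_{*}(S^{m-1}\times L;R)\;\cong\;\gH^{\ov{p}}_{*}(L;R)\,\oplus\,\gH^{\ov{p}}_{*-m+1}(L;R),
\]
in which the first summand is the image of the unit and the second is the kernel of the collapse $S^{m-1}\to\{pt\}$. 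Under this splitting, the map induced by the inclusion $(\R^m\menos\ov{B}_{i})\times L\hookrightarrow\R^m\times L$ is the identity on the first summand and zero on the second. Chasing the long exact sequence of the pair then extracts
\[
\gH^{\ov{p}}_{k}(\R^m\times L,(\R^m\menos\ov{B}_{i})\times L;R)\;\cong\;\gH^{\ov{p}}_{k-m}(L;R),
\]
and naturality of the ingredients reduces the transition maps of the inverse system to the identity. Passing to $\varprojlim_{i}$ gives the announced isomorphism.

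The main obstacle is establishing the K\"unneth splitting for $S^{m-1}\times L$. In the generality of this paper (arbitrary commutative ring, general perversities, no Goresky-MacPherson growth condition), it does not follow from a black-box K\"unneth theorem; I would prove it by induction on $m$ via the two-hemisphere Mayer-Vietoris, the base case $m=1$ being tautological since $S^{0}\times L$ is the disjoint union of two copies of $L$. A secondary point is that the radial retraction used on $\R^m\menos\ov{B}_{i}$ is a stratified homotopy equivalence compatible with the inclusion as $i$ increases, which is clear since all these open sets retract onto the same ambient sphere.
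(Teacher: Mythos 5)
Your argument is correct. The paper does not actually prove this statement --- it is quoted from \cite[Proposition 3.5]{CST5} --- but your proof follows the route one would expect and that the cited reference takes in substance: reduce to the inverse limit over the cofinal family of compacts $\ov{B}_{i}\times L$ (cofinal precisely because $L$ is compact), identify each relative group $\gH^{\ov{p}}_{k}(\R^m\times L,(\R^m\menos\ov{B}_{i})\times L)$ with $\gH^{\ov{p}}_{k-m}(L)$, and observe that the transition maps become identities. Two remarks to tighten the write-up. First, Mittag--Leffler for the chain complexes does not by itself make $H_{k}$ commute with $\varprojlim_{i}$; as in the proof of \propref{prop:BMprojectivelimU} it only yields the Milnor sequence $0\to\varprojlim^1_{i}\gH^{\ov{p}}_{k+1}(\cdot)\to H_{k}(\varprojlim_{i}\gC^{\ov{p}}_{*}(\cdot))\to\varprojlim_{i}\gH^{\ov{p}}_{k}(\cdot)\to 0$, and the $\varprojlim^1$ term vanishes only because you prove afterwards that the transition maps on relative homology are isomorphisms --- so present that identification first and then conclude. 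Second, you are right that the K\"unneth-type splitting for $S^{m-1}\times L$ is the real content and cannot be black-boxed for general perversities and rings; your two-hemisphere Mayer--Vietoris induction is sound, since each hemisphere times $L$ is a product $\R^{m-1}\times L$ handled by the product theorem and the sphere factor is unfiltered with free homology, so the classical splitting argument transfers verbatim. An equivalent, slightly leaner organization is to induct on $m$ directly at the level of the pairs, starting from the one-variable computation $\gH^{\ov{p}}_{k}(\R\times Y,(\R\menos[-i,i])\times Y)\cong\gH^{\ov{p}}_{k-1}(Y)$ and crossing with $\R$ one factor at a time, which avoids introducing the sphere and the splitting altogether; either way the outcome and the naturality in $i$ are the same.
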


\begin{proposition}{\cite[Proposition 3.7]{CST5}}\label{prop:conefoisR}
Let $L$ be a compact space
 and $\ov{p}$ be a perversity on the cone $\rc L$ of apex  $\tv$. Then we have
 $$\gH^{\infty,\ov{p}}_{k}(\R^m\times \rc L;R)=\left\{
\begin{array}{ccl}
0&\text{if}&k\leq m+D\ov{p}(\tv)+1,\\
\gH^{\ov{p}}_{k-m-1}(L;R)
&\text{if}&k\geq m+ D\ov{p}(\tv)+2.
\end{array}\right.
 $$
\end{proposition}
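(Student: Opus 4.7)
The plan is to compute $\gH^{\infty,\ov{p}}_{k}(\R^m\times \rc L;R)$ by invoking the inverse limit presentation of Borel--Moore intersection homology from \propref{prop:BMprojectivelim}, treating first the base case $m=0$ and then extending to general $m$.

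For the base case $m=0$, I would work with the cofinal family of compacts $K_s=\rc_{1-1/s}L\subset\rc L$ for integer $s\geq 2$, where $\rc_\lambda L$ denotes the closed subcone image of $L\times[0,\lambda]$. The complement $\rc L\menos K_s=L\times ]1-1/s,1[$ is a cylindrical neighborhood of $L\times\{1\}$ whose tame intersection homology is naturally identified with $\gH^{\ov p}_*(L;R)$ via the product formula. Combining this with the cone formula for tame intersection homology --- $\gH^{\ov p}_k(\rc L;R)\cong\gH^{\ov p}_k(L;R)$ for $k\leq D\ov p(\tv)$ and zero above --- and the fact that the inclusion $L\times ]1-1/s,1[\hookrightarrow\rc L$ induces the identity on $\gH^{\ov p}_*(L;R)$ in the range of coincidence, the long exact sequence of the pair collapses to give
$$
\gH^{\ov p}_k(\rc L, \rc L\menos K_s;R)=
\begin{cases}
0 & k\leq D\ov p(\tv)+1,\\
\gH^{\ov p}_{k-1}(L;R) & k\geq D\ov p(\tv)+2.
\end{cases}
$$
These groups, and the transition maps in the inverse system indexed by $s$, are canonically independent of $s$, so the Mittag--Leffler condition is trivially verified and $\varprojlim^1=0$. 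The Milnor sequence combined with \propref{prop:BMprojectivelim} then yields the claimed formula for $m=0$.

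For general $m\geq 1$, the cleanest route is to invoke (or first establish) a product formula $\gH^{\infty,\ov p}_k(\R^m\times Y;R)\cong\gH^{\infty,\ov p}_{k-m}(Y;R)$ for sufficiently nice perverse spaces $Y$, generalizing \propref{prop:foisR} beyond compact factors so as to include open cones; this would reduce the problem immediately to the base case. Failing this, I would repeat the inverse-limit strategy with the product exhaustion $K_s=\ov B^m_s\times\rc_{1-1/s}L$ and decompose the complement $X\menos K_s$ by Mayer--Vietoris (\thmref{thm:MV}) into the radial-infinity piece $(\R^m\menos\ov B^m_s)\times\rc L$ and the conical-infinity piece $\R^m\times L\times ]1-1/s,1[$, whose intersection is $(\R^m\menos\ov B^m_s)\times L\times ]1-1/s,1[$. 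Each piece is computable via the cone formula and the tame product formulas, using the homotopy equivalence $\R^m\menos\ov B^m_s\simeq S^{m-1}$; the resulting MV sequence, combined with the LES of the pair, yields the relative intersection homology, the $m$-shift arising from the $\R^m$ factor.

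The main obstacle is this second, direct route: the two independent directions of infinity in $X$ force a K\"unneth-type analysis involving $H_*(S^{m-1})$ cross terms, and one must verify that the inverse system on the relative groups remains Mittag--Leffler so that $\varprojlim^1$ vanishes in the relevant degrees. A pre-established product formula for Borel--Moore intersection homology with $\R^m$ --- likely the intermediate tool actually used in \cite{CST5} --- would entirely circumvent these computations and reduce the statement to the clean base case computation above.
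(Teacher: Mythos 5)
This proposition is imported verbatim from \cite[Proposition 3.7]{CST5}; the present paper contains no proof of it, so your argument can only be assessed on its own terms. Your base case $m=0$ is correct and essentially complete: the images $K_{s}$ of $L\times[0,1-1/s]$ in $\rc L$ form a cofinal family of compacts, $\rc L\menos K_{s}=L\times\, ]1-1/s,1[$, and the long exact sequence of the pair combined with the cone formula (isomorphism induced by the inclusion of the link in degrees $\leq D\ov{p}(\tv)$, vanishing above) yields exactly the stated relative groups, with transition maps that are isomorphisms; Mittag--Leffler, the Milnor sequence and \propref{prop:BMprojectivelim} then give the claim for $m=0$.

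The genuine gap is the case $m\geq 1$, where neither of your two routes is actually carried out. The product formula $\gH^{\infty,\ov{p}}_{k}(\R^m\times Y;R)\cong\gH^{\infty,\ov{p}}_{k-m}(Y;R)$ you would like to invoke is precisely what Propositions \ref{prop:foisR} and \ref{prop:conefoisR} are instances of, so assuming it unproved is close to assuming the statement; your fallback is explicitly left as an ``obstacle'' rather than executed, and it moreover appeals to \thmref{thm:MV}, the Borel--Moore Mayer--Vietoris sequence, whereas computing $\gH^{\ov{p}}_{*}(X\menos K_{s})$ for the long exact sequence of the pair requires the ordinary Mayer--Vietoris sequence for finite intersection chains (\cite[Proposition 4.1]{CST3}). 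The $H_{*}(S^{m-1})$ cross-term bookkeeping you fear can in fact be avoided entirely: taking $K_{s}=\ov{B}_{s}\times(\text{image of }L\times[0,1-1/s])$, the complement $X\menos K_{s}$ is the union of $(\R^m\menos\ov{B}_{s})\times\rc L$ and $\R^m\times L\times\, ]1-1/s,1[$, so the pair $(X,X\menos K_{s})$ is the product of the manifold pair $(\R^m,\R^m\menos\ov{B}_{s})$, whose homology is free of rank one concentrated in degree $m$, with the pair $(\rc L,\,L\times\, ]1-1/s,1[)$. A K\"unneth argument for intersection homology with an unfiltered manifold factor then gives $\gH^{\ov{p}}_{k}(X,X\menos K_{s})\cong\gH^{\ov{p}}_{k-m}(\rc L,\,L\times\, ]1-1/s,1[)$ with no correction terms, the transition maps are again isomorphisms, and your $m=0$ computation finishes the proof. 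Until one of these completions is written down, the proposal establishes only the case $m=0$.
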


\section{Poincar\'e duality}\label{sec:Poincare}

\subsection{Fundamental class and cap product}\label{subsec:cap}
Let $(X,\ov{p})$ be a perverse pseudomanifold of dimension $n$.

Recall from \cite{GM1} that an $R$-orientation of  $X$
is an $R$-orientation of the manifold $X^n:=X\menos X_{n-1}$. For any $x\in X^n$, we denote by
$\tto_{x}\in H_{n}(X^n,X^n\menos\{x\};R)=\gH_{n}^{\ov{0}}(X,X\menos\{x\};R)$ the associated local orientation.
We know (see \cite{FM} or \cite[Theorem 8.1.18]{FriedmanBook}) that, 
for any compact $K\subset X$, there exists a unique element
$\Gamma^{K}_{X}\in \gH_{n}^{\ov{0}}(X,X\menos K;R)$
whose restriction equals $\tto_{x}$ for any $x\in K$.
These classes give a Borel-Moore homology class, called \emph{the fundamental class of $X$,}
$$\Gamma_{X}=\langle \Gamma^{K}_{X}\rangle_{K}\in
\gH^{\infty,\ov{0}}_{n}(X;R).$$
The fundamental classes are natural for the injections between open subsets of $X$.
 Given two open subsets $V \subset U \subset X$,  the map induced in Borel-Moore homology by the identity 
 \begin{equation}\label{equa:restr}
 I^{X,\ov{p}}_{V,U}\colon 
 \varprojlim_{K\subset U}
  \gC^{\ov{p}}_{*}(X,X\backslash K;R)
  \to  \varprojlim_{K\subset V}
   \gC^{\ov{p}}_{*}(X,X\backslash K;R)
 \end{equation}
 sends $\Gamma_{U}$ on $\Gamma_{V}$, see \cite[Theorem 8.1.18]{FriedmanBook}.

\medskip
Suppose that $X$ is equipped with two perversities, $\ov{p}$ and $\ov{q}$.
 In \cite[Proposition 4.2]{CST4}, we prove the existence of a map 
 \begin{equation}\label{equa:cupprduitespacefiltre}
 -\smile -\colon\tN^k_{\ov{p}}(X;R)\otimes \tN^{\ell}_{\ov{q}}(X;R)\to \tN^{k+\ell}_{\ov{p}+\ov{q}}(X;R),
 \end{equation}
 inducing  an associative  and  commutative graded product, called \emph{intersection cup product,}
  \begin{equation}\label{equa:cupprduitTWcohomologie}
-\smile -\colon \crH^k_{\ov{p}}(X;R)\otimes \crH^{\ell}_{\ov{q}}(X;R)\to
 \crH^{k+\ell}_{\ov{p}+\ov{q}}(X;R).
 \end{equation}
 Mention also from \cite[Propositions 6.6 and 6.7]{CST4} the existence of \emph{cap products,}
\begin{equation}\label{equa:caphomology}
 -\frown - \colon \widetilde{N}_{\overline{p}}^{i}(X;R)\otimes {\gC}_{j}^{\overline{q}}(X;R)
\to {\gC}_{j-i}^{\overline{p}+\overline{q}}(X;R)
\end{equation}
such that $
(\eta\smile \omega)\frown \xi=\eta\frown(\omega\frown\xi)$ and
$\gd(\omega\frown \xi)=d\omega\frown\xi+(-1)^{|\omega|}\omega\frown \gd \xi
$.
Thus, this cap product induces a \emph{cap product} in homology,
\begin{equation}\label{equa:caphomologyhomology}
 - \frown - \colon \crH_{\overline{p}}^{i}(X;R)\otimes {\gH}_{j}^{\overline{q}}(X;R)
\to {\gH}_{j-i}^{\overline{p}+\overline{q}}(X;R).
\end{equation}
The map \eqref{equa:caphomology} can be extended to a map 
\begin{equation}\label{equa:capBM}
 -\frown - \colon \widetilde{N}_{\overline{p}}^{i}(X;R)\otimes {\gC}_{j}^{\infty,\overline{q}}(X;R)
\to {\gC}_{j-i}^{\infty,\overline{p}+\overline{q}}(X;R)
\end{equation}
as follows:\\
given $\alpha\in \tN^i_{\ov{p}}(X;R)$
and
$\eta=\langle \eta_{K}\rangle_{K}\in \gC^{\infty,\ov{q}}_{j}(X;R)$, we set
$\alpha\frown \eta=\langle \alpha\frown \eta_{K}\rangle_{K}\in  \gC^{\infty,\ov{p}+\ov{q}}_{j-i}(X;R)$.
This definition makes sense since the cap product \eqref{equa:caphomology} is natural. Moreover, from 
the compatibility with the differentials, we get an induced map, 
\begin{equation}\label{equa:caphomologyhomologyBM}
 - \frown - \colon \crH_{\overline{p}}^{i}(X;R)\otimes {\gH}_{j}^{\infty,\overline{q}}(X;R)
\to {\gH}_{j-i}^{\infty,\overline{p}+\overline{q}}(X;R).
\end{equation}
As  $\Gamma_{X}\in {\gH}_{n}^{\infty,\overline{0}}(X;R)$, the cap product with the fundamental class gives a map,
\begin{equation}\label{equa:dualmap}
\cD_{X}:=-\frown \Gamma_{X}\colon \crH^*_{\ov{p}}(X;R)\to \gH^{\infty,\ov{p}}_{n-*}(X;R),
\end{equation}
that is the Poincar\'e duality map of the next theorem. 
Let us emphasize that this map exists at the level of chain complexes.

\medskip
In this paradigm, the Poincar\'e duality comes from a cap product with the fundamental class, $\Gamma_{X}$. 
As this one is
a Borel-Moore homology class, we need to adapt \eqref{equa:caphomologyhomology}. In fact, there are two ways of working.
\begin{itemize}
\item We may keep the blown-up cohomology, for which the cap  with $\Gamma_{X}$ gives a Borel-Moore homology class as in  \eqref{equa:dualmap}. This is the subject of this work which brings \thmref{thm:dual} below.
\item We may also work with  blown-up cohomology with compact supports, for which the cap product
with $\Gamma_{X}$ gives a (finite) homology class. That is, we extend \eqref{equa:caphomology} in
\begin{equation}\label{equa:capcompact}
 -\frown - \colon \widetilde{N}_{\overline{p},c}^{i}(X;R)\otimes {\gC}_{j}^{\infty,\overline{q}}(X;R)
\to {\gC}_{j-i}^{\overline{p}+\overline{q}}(X;R),
\end{equation}
This approach was used in \cite{CST2} and led  to the Poincar\'e duality
(\cite[Theorem B]{CST2})
$$-\frown \Gamma_{X}\colon \crH^{i}_{\ov{p},c}(X;R)\xrightarrow{\cong}
\gH_{n-i}^{\ov{p}}(X;R).$$
\end{itemize}

\subsection{Main theorem} 

We prove the existence of an isomorphism between the Borel-Moore $\ov{p}$-intersection homology 
and the blown-up 
$\ov{p}$-intersection cohomology.

\begin{theorem}\label{thm:dual}
Let $(X,\ov p)$ be an $n$-dimensional,  second countable and oriented perverse pseudomanifold.  
The cap product with the fundamental class induces a Poincar\'e duality isomorphism
$$\cD_{X}\colon \crH^*_{\ov{p}}(X;R)
\xrightarrow{\cong}
 \gH^{\infty,\ov{p}}_{n-*}(X;R).$$
\end{theorem}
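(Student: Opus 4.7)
The plan is a Bredon-type induction on open subsets of $X$, nested inside an induction on $n=\dim X$. For an open $U \subset X$, let $P(U)$ denote the statement that $\cD_U \colon \crH^*_{\ov{p}}(U; R) \to \gH^{\infty, \ov{p}}_{n-*}(U; R)$ is an isomorphism. The base of the outer induction reduces to classical Poincar\'e duality for topological manifolds (when no singular strata are present).

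The first ingredient is naturality: for open $V \subset U$, the fundamental class satisfies $I^{X, \ov 0}_{V, U}(\Gamma_U) = \Gamma_V$ and the cap product \eqref{equa:capBM} is natural at the chain level, so $\cD$ defines a morphism between the two contravariant functors (after identifying Borel-Moore homology with $\gH^{\infty, X, \ov p}_*$ via \propref{prop:BMprojectivelimU}). Combining this with the Mayer-Vietoris sequences from \propref{thm:MVcourte} and \thmref{thm:MV} and applying the five lemma, I would conclude that $P$ is stable under Mayer-Vietoris: $P(U)$, $P(V)$, $P(U \cap V)$ imply $P(U \cup V)$.

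The second ingredient is $P(W)$ for a distinguished neighborhood $W = \R^m \times \rc L$, where $L$ is a compact pseudomanifold of dimension $n - m - 1 < n$. Equations \eqref{pro} and \eqref{Cone} give
\[\crH^j_{\ov p}(W; R) = \begin{cases} \crH^j_{\ov p}(L;R), & j \leq \ov p(\tv), \\ 0, & \text{otherwise,} \end{cases}\]
while \propref{prop:conefoisR}, together with the identity $D\ov p(\tv) = n - m - 2 - \ov p(\tv)$ (since the apex stratum has codimension $n-m$), gives
\[\gH^{\infty, \ov p}_{n - j}(W; R) = \begin{cases} \gH^{\ov p}_{\dim L - j}(L;R), & j \leq \ov p(\tv), \\ 0, & \text{otherwise.}\end{cases}\]
Since $L$ is compact its Borel-Moore and ordinary intersection homologies agree, so the outer inductive hypothesis on $L$ supplies the abstract isomorphism. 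One verifies it is realized by $\cD_W$ by identifying $\Gamma_W$ with the product of the standard orientation of $\R^m$ and the Borel-Moore fundamental class of $\rc L$, the latter supported on the open regular part $L \times ]0,1[$.

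To globalize, I would use second-countability to choose a countable cover of $X$ by distinguished neighborhoods $(W_i)_{i\in\N}$, set $U_k = W_1 \cup \cdots \cup W_k$, propagate $P$ to each $U_k$ by the Mayer-Vietoris step, and pass to $X = \bigcup_k U_k$ via Milnor $\varprojlim^1$ sequences on both sides. The Mittag-Leffler condition on the Borel-Moore side is available from the hemicompact exhaustion argument used in the proof of \thmref{thm:MV}, and the analog must be set up for the blown-up side. The main obstacle will be the chain-level verification that $-\frown \Gamma_X$ is compatible with the Mayer-Vietoris short exact sequence \eqref{MV2} and with these inverse-limit constructions; this requires careful tracking through the inverse-limit definition of $\gC^{\infty, X, \ov p}_*(-;R)$ and the interplay between $\varprojlim^1$ terms on both sides.
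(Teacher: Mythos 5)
Your overall strategy -- naturality of the cap product with the fundamental class, Mayer--Vietoris on both sides, a computation on distinguished neighborhoods, and a Bredon-type bootstrap -- is exactly the skeleton of the paper's proof, which packages the induction into \propref{prop:supersuperbredon} and then only has to verify its four hypotheses. Your verification of the Mayer--Vietoris compatibility (condition (i)) matches the paper's large commutative diagram. But two of your steps have genuine gaps.

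First, the distinguished-neighborhood step. You propose to produce the isomorphism for $W=\R^m\times\rc L$ by combining the abstract computations of \eqref{Cone} and \propref{prop:conefoisR} with Poincar\'e duality for $L$ (outer induction on dimension), and then to check that this composite ``is realized by $\cD_W$'' via a product decomposition of $\Gamma_W$. That last sentence hides the hard part: the isomorphism $\gH^{\infty,\ov p}_{n-j}(W;R)\cong\gH^{\ov p}_{\dim L-j}(L;R)$ of \propref{prop:conefoisR} is not induced by an obvious map, and identifying $\Gamma_{\R^m\times\rc L}$ with a cross product compatibly with the cap product is a nontrivial K\"unneth-type verification that you have not supplied. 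The paper's condition (iii) is designed precisely to avoid this: one only needs to deduce the isomorphism for $\R^m\times\rc L$ from the (inductively known) isomorphism for $\R^m\times L\times\,]0,1[$, and this follows from the naturality square \eqref{conm} for the inclusion $\R^m\times L\times\,]0,1[\,\hookrightarrow \R^m\times\rc L$, because both horizontal restriction maps are isomorphisms for $k\le\ov p(\tv)$ and both sides vanish for $k>\ov p(\tv)$. No identification of $\Gamma_W$ and no duality for $L$ itself is needed.

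Second, the globalization. Setting $U_k=W_1\cup\dots\cup W_k$ and propagating by Mayer--Vietoris does not work as stated: to handle $U_k\cup W_{k+1}$ you need $P(U_k\cap W_{k+1})$, and this intersection is an arbitrary open subset of a distinguished neighborhood, not a distinguished neighborhood nor a finite union of them. Repairing this forces the full Bredon scheme (an induction over open subsets of $\R^m\times\rc L$, a depth induction, and the disjoint-union axiom), which is exactly what \propref{prop:supersuperbredon} encapsulates. Moreover, your passage to the limit requires a Milnor $\varprojlim^1$ sequence for the blown-up cohomology of an increasing union of opens; the restriction maps $\tN^*_{\ov p}(U_{k+1};R)\to\tN^*_{\ov p}(U_k;R)$ are not obviously surjective, so this would need a separate argument that neither you nor the paper provides (the paper does not need it, again because of \propref{prop:supersuperbredon}).
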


The proof  uses the following result. 

\begin{proposition}{\cite[Proposition 13.2]{CST5}}\label{prop:supersuperbredon}
Let $\mathcal F_{X}$ be the category whose objects are (stratified homeomorphic to) open subsets
of a given paracompact and separable CS set $X$ and whose morphisms are  stratified homeomorphisms and inclusions.
Let  $\mathcal Ab_{*}$ be the category of graded abelian groups. Let $F^{*},\,G^{*}\colon \mathcal F_{X}\to \mathcal Ab$
be two functors and
 $\Phi\colon F^{*}\to G^{*}$ a natural transformation satisfying
 the conditions listed
below.
\begin{enumerate}[(i)]
\item The functors $F^{*}$ and $G^{*}$ admit  Mayer-Vietoris exact sequences and the natural transformation $\Phi$ 
 induces a commutative diagram between these sequences.
\item If $\{U_{\alpha}\}$ is a disjoint collection of open subsets of $X$  and $\Phi\colon F_{*}(U_{\alpha})\to G_{*}(U_{\alpha})$ is an isomorphism for each $\alpha$, then $\Phi\colon F^{*}(\bigsqcup_{\alpha}U_{\alpha})\to G^{*}(\bigsqcup_{\alpha}U_{\alpha})$  is an isomorphism.
\item If $L$ is a compact filtered space such that 
$X$  has an open subset  stratified homeomorphic
to $\R^i\times \rc L$ and, if
$\Phi\colon F^{*}(\R^i\times (\rc L\backslash \{\tv\}))\to G^{*}(\R^i\times (\rc L\backslash \{\tv\}))$
is an isomorphism, then so is
$\Phi\colon F^{*}(\R^i\times \rc L)\to G^{*}(\R^i\times \rc L)$. (Here, $\tv$ is the apex of the cone $\rc L$.)
\item If $U$ is an open subset of X contained within a single stratum and homeomorphic
to an Euclidean space, then $\Phi\colon F^{*}(U)\to G^{*}(U)$ is an isomorphism.
\end{enumerate}
Then $\Phi\colon F^{*}(X)\to G^{*}(X)$ is an isomorphism.
\end{proposition}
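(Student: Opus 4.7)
The plan is to apply the abstract recognition principle of \propref{prop:supersuperbredon} to the pair of functors $F^k(U)=\crH^k_{\ov p}(U;R)$ and $G^k(U)=\gH^{\infty,\ov p}_{n-k}(U;R)$ on the category $\mathcal F_X$ of open subsets of $X$, with natural transformation $\cD=-\frown\Gamma$ induced by the cap product \eqref{equa:capBM} with the fundamental class. A preliminary task is to make sense of the naturality of $\cD$ under open inclusions: since $\gH^{\infty,\ov p}_*(-;R)$ is not directly functorial, I would replace it by the functorial substitute $\gC^{\infty,X,\ov p}_*(U;R)$ of \propref{prop:BMprojectivelimU}, and then invoke the identity $I^{X,\ov 0}_{V,U}(\Gamma_U)=\Gamma_V$ recalled in \eqref{equa:restr} to verify that the cap-product square commutes at the chain level. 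The second-countability hypothesis ensures that $X$ is paracompact, separable, metrizable and hemicompact, so both \propref{prop:BMprojectivelimU} and \propref{prop:supersuperbredon} apply.

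Condition~(iv) is quick: any open $U\subset X$ contained within a single stratum must lie in the regular part $X\setminus X_{n-1}$ (otherwise $U\subset X_{n-1}$ would be open with empty intersection with the dense regular stratum), so the perversity on $U$ is trivial, both sides reduce to ordinary singular and Borel--Moore homology of $\R^n$, and $\cD_{\R^n}$ is classical Poincar\'e duality. Condition~(ii) is immediate because both $\crH^*_{\ov p}$ and $\gH^{\infty,\ov p}_*$ split over a disjoint decomposition, compatibly with the cap product.

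For Mayer--Vietoris compatibility (condition~(i)), I would combine \propref{thm:MVcourte} with \thmref{thm:MV} and check that $\cD$ intertwines the two connecting homomorphisms up to sign. This goes through the Leibniz-type identities recalled in \subsecref{subsec:cap}, $(\eta\smile\omega)\frown\xi=\eta\frown(\omega\frown\xi)$ and $\gd(\omega\frown\xi)=d\omega\frown\xi+(-1)^{|\omega|}\omega\frown\gd\xi$, applied to a cycle representative of $\Gamma_X$ and a partition-of-unity splitting of a blown-up cocycle on $W_1\cap W_2$; here the $\cU$-small cochain machinery of \defref{def:Upetit} and \propref{cor:Upetits} is what makes the splitting legitimate at the cochain level.

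The main obstacle is condition~(iii), the cone step for $U=\R^i\times\rc L$. Combining \eqref{pro} and \eqref{Cone} gives $\crH^k_{\ov p}(U;R)\cong\crH^k_{\ov p}(L;R)$ when $k\leq\ov p(\tv)$ and $0$ otherwise, while \propref{prop:foisR} together with \propref{prop:conefoisR} identifies $\gH^{\infty,\ov p}_{n-k}(U;R)\cong\gH^{\ov p}_{(n-i-1)-k}(L;R)$ in exactly the same range $k\leq\ov p(\tv)$ and $0$ outside, using the codimension identity $D\ov p(\tv)=(n-i)-2-\ov p(\tv)$. I would organize these matchings into the commutative square
$$\xymatrix{
\crH^k_{\ov p}(\R^i\times\rc L)\ar[r]\ar[d]_{\cD_U}&\crH^k_{\ov p}(\R^i\times L\times\,]0,1[)\ar[d]^{\cD_{U\setminus\{\tv\}}}\\
\gH^{\infty,\ov p}_{n-k}(\R^i\times\rc L)\ar[r]&\gH^{\infty,\ov p}_{n-k}(\R^i\times L\times\,]0,1[)
}$$
whose top arrow is restriction, whose bottom arrow is induced at the level of the contravariant complex $\gC^{\infty,X,\ov p}_*(-;R)$, and whose verticals are the two cap products. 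In the nonvanishing range $k\leq\ov p(\tv)$ both horizontals are isomorphisms, so $\cD_U$ inherits the isomorphism from $\cD_{U\setminus\{\tv\}}$ hypothesized in (iii); outside this range both targets vanish. The delicate technical point is to choose a representative of $\Gamma_U$ whose restriction to the apex-complement recovers $\Gamma_{\R^{i+1}\times L}$ and to check that the cap product against the apex-localized remainder exactly implements the truncation on both sides. Once this cone diagram is in place, \propref{prop:supersuperbredon} delivers the isomorphism $\cD_X$ on the whole of $X$.
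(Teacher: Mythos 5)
Your proposal does not prove the statement at hand. The statement is the abstract recognition principle itself (\propref{prop:supersuperbredon}): given two functors $F^{*},G^{*}$ on the category of open subsets of a paracompact, separable CS set and a natural transformation $\Phi$ satisfying conditions (i)--(iv), conclude that $\Phi$ is an isomorphism on $X$. Your text instead \emph{applies} this principle, taking $F^{k}(U)=\crH^{k}_{\ov{p}}(U;R)$, $G^{k}(U)=\gH^{\infty,\ov{p}}_{n-k}(U;R)$ and $\Phi=\cD$, and verifies hypotheses (i)--(iv) for that particular pair. That is a proof of \thmref{thm:dual}, not of the proposition; as an argument for the proposition it is circular, since its very first step invokes the conclusion it is supposed to establish. (The paper itself does not reprove the proposition either: it imports it from \cite[Proposition 13.2]{CST5}. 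Its proof of \thmref{thm:dual} is essentially the argument you wrote, so your verifications of (i)--(iv) are on target for that theorem, but they are beside the point here.)

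A genuine proof of \propref{prop:supersuperbredon} requires a different kind of argument, of bootstrap type: one first establishes the isomorphism on open subsets of a distinguished chart $\R^{i}\times\rc L$ by induction on the depth of the filtration, using (iii) and (iv) as the base and inductive steps for cones; one then passes from convex or chart-like opens to finite unions via the Mayer--Vietoris compatibility (i), to countable disjoint unions via (ii), and to increasing unions and arbitrary opens using paracompactness and separability of $X$ (this is where those hypotheses enter, and none of this appears in your proposal). If your intended target was in fact \thmref{thm:dual}, your outline matches the paper's proof closely --- including the use of $\gC^{\infty,X,\ov{p}}_{*}(-;R)$ to restore functoriality, the restriction compatibility of fundamental classes, and the cone computation via \propref{prop:foisR} and \propref{prop:conefoisR} --- but as written it leaves the stated proposition entirely unproved.
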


\begin{proof}[Proof of \thmref{thm:dual}]
As any open subset $U\subset X$ is an oriented pseudomanifold, we may consider the 
associated homomorphism defined in (\ref{equa:dualmap}), 
$\dos {\cD}{U}\colon  \crH_{k}^{ \ov{p}}(U) \to   \gH^{\infty,X,\ov{p}}_{n-k}(U)$,
where we use the identification
$\gH^{\infty,\ov{p}}_{n-k}(U)
\xrightarrow{\cong}
\gH^{\infty,X,\ov{p}}_{n-k}(U)$
given by \propref{prop:BMprojectivelimU}.
Let $V\subset U\subset X$ be two open subsets of $X$, endowed with the induced structures of pseudomanifold
of $X$. The canonical inclusion (see \eqref{equa:maps2}) and 
the cap product with the fundamental class give a commutative diagram,

\begin{equation}\label{conm}
\xymatrix{
\crH^k_{\ov{p}}(U)
\ar[d]_{\cD_{U}}
\ar[r]^-{(I^{\ov{p}}_{V,U})^*}
&
\crH^k_{\ov{p}}(V)
\ar[d]^{\cD_{V}}\\
\gH_{n-k}^{\infty,X,\ov{p}}(U)
\ar[r]^-{(I^{X,\ov{p}}_{V,U})^*}
&
\gH_{n-k}^{\infty,X,\ov{p}}(V).
}
\end{equation}
We apply \propref{prop:supersuperbredon} to the natural transformation
$\cD_{U}\colon \crH_{\ov{p}}^k(U)
\to
\gH_{n-k}^{\infty,X,\ov{p}}(U)$.
The proof is reduced to the verifications of its hypotheses. 

\medskip
$\bullet$ First, let us notice that the conditions (ii) and (iv) are direct.

\medskip
$\bullet$ \emph{Property} (i). Let $\cU = \{W_1,W_2\}$ be an open covering of $X$.
Mayer-Vietoris sequences are constructed in \propref{thm:MVcourte} and \thmref{thm:MV}. We
 build a morphism between them with the following diagram.

 In the first row, we take over the notations of \propref{thm:MVcourte}.
As in the proof of \thmref{thm:MV}, we choose sequences $(U_{i})_{i\in\N}$ and $(V_{i})_{i\in\N}$ of 
relatively compact open subsets
of $W_{1}$ and $W_{2}$, respectively, such that $\ov{U}_{i}\subset U_{i+1}$, $\cup_{i\in\N}U_{i}=W_{1}$
and
$\ov{V}_{i}\subset V_{i+1}$, $\cup_{i\in\N}V_{i}=W_{2}$.
The last row corresponds to \eqref{MV2}.
$$
\def\objectstyle{\scriptstyle}
\xymatrix@C=3.6mm{		
0\ar[r]&
\tN^{*,\cU}_{\ov{p}}(X)
\ar[r] &
\tN^{*,\cU_{1}}_{\ov{p}}(W_{1})
\oplus
\tN^{*,\cU_{2}}_{\ov{p}}(W_{2})
\ar[r]  
\ar@{}[dl]+<10pt>^{\fbox{\tiny{I}}}&
\tN^*_{\ov{p}}(W_{1}\cap W_{2})
\ar[r]  \ar@{}[dl]+<10pt>^{\fbox{\tiny{II}}}&
0\\
&
\tN^*_{\ov{p}}(X)
\ar[r] \ar[u]^{\rho}   
\ar[d]_{\frown \gamma_X}&
\tN^*_{\ov{p}}(W_{1})
\oplus
\tN^*_{\ov{p}}(W_{2})
 \ar[r]   \ar[u]_{\rho}  \ar@{}[dl]+<10pt>^{\fbox{\tiny{III}}} 
\ar[d]^{\frown \gamma_{W_1} \oplus \frown \gamma_{W_2}}&
\tN^*_{\ov{p}}(W_{1}\cap W_{2})
\ar@{=}[u] 
 \ar@{}[dl]+<10pt>^{\fbox{\tiny{IV}}}
\ar[d]^{\frown \gamma_{W_1 \cap W_2}}
&&\\
&\gC^{\infty,\ov{p}}_{n-*}(X)
 \ar[r]  
 \ar[d]_{I^{\ov{p}}_{X}}&
\gC^{\infty,\ov{p}}_{n-*}(W_{1})
\oplus
\gC^{\infty,\ov{p}}_{n-*}(W_{2})
\ar[r]     
\ar@{}[dl]+<10pt>^{\fbox{\tiny{V}}} 
\ar[d]^{I^{\ov{p}}_{W_{1}}\oplus {I^{\ov{p}}_{W_{2}}}}&
\gC^{\infty,\ov{p}}_{n-*}(W_{1}\cap W_{2})
\ar@{}[dl]+<10pt>^{\fbox{\tiny{VI}}}  
\ar[d]^{I^{\ov{p}}_{W_{1}\cap W_{2}}}
&&\\
&\gC^{\infty,X,\ov{p}}_{n-*}(X) 
\ar[r] &
\gC^{\infty,X,\ov{p}}_{n-*}(W_{1})
\oplus
\gC^{\infty,X,\ov{p}}_{n-*}(W_{2}) 
\ar[r]     
\ar@{}[dl]+<10pt>^{\fbox{\tiny{VII}}}&
\gC^{\infty,X,\ov{p}}_{n-*}(W_{1}\cap W_{2})
\ar@{}[dl]+<10pt>^{\fbox{\tiny{VIII}}}&&\\
 0\ar[r]&
\varprojlim_{i}\gC_{*}^{\ov{p}}(X, X\menos (\ov{U}_{i}\cup \ov{V}_{i}))
\ar@{=}[u]
\ar[r]&
\varprojlim_{i}\gC_{*}^{\ov{p}}(X, X\menos \ov{U}_{i})\oplus
\varprojlim_{i}\gC_{*}^{\ov{p}}(X, X\menos \ov{V}_{i})
\ar@{=}[u]
\ar[r]&
\varprojlim_{i} \gQ_{*}^{\ov{p}}(X, \ov{U}_{i},\ov{V}_{i}))
\ar[u]_{\psi}
\ar[r]&0.
}
$$
%
The maps $\rho$ denote restriction and  the cycles $\gamma_{Y}\in \gC^{\infty,\ov{0}}_{n}(Y)$  
represent the fundamental class of $Y$, for $Y=X,\,W_{1},\,W_{2}, W_{1}\cap W_{2}$.
First, we prove that the above diagram commutes.

The square VII is clearly commutative. 
The vertical maps of squares I, II, V and VI  are induced by restrictions. Thus these squares are commutative. 
By naturality of the fundamental classes, we may choose for $\gamma_{W_1}$  the restriction of $\gamma_X$ 
and similarly for $\gamma_{W_2}$ and $\gamma_{W_1\cap W_2}$. 
So, diagrams III and IV commute.
 The diagram  VIII is commutative by construction of the map $\psi$, see \eqref{equa:MV3}.

\medskip
Let us observe that the columns of the diagrams I, II, V, VI, VII and VIII are quasi-isomorphisms. This is
a consequence of  
\cite[Theorem B]{CST4}, 
\propref{prop:BMprojectivelimU} and \eqref{equa:MV3} in the proof of \thmref{thm:MV}, respectively.
So, we get the following commutative diagram,
$$
 \def\objectstyle{\scriptstyle}
\xymatrix@C=6mm{	
\dots\ar[r]&
\crH^k_{\ov{p}}(X)
\ar[r] \ar[d]^{\cD_X}&
\crH^k_{\ov{p}}(W_{1})
\oplus
\crH^k_{\ov{p}}(W_{2})
 \ar[r]  \ar[d]^{\cD_{W_1} \oplus \cD_{W_2}}
&
\crH^k_{\ov{p}}(W_{1}\cap W_{2})
\ar[r] \ar[d]^{\cD_{W_1 \cap W_2}} 
&
\crH^{k+1}_{\ov{p}}(X)
\ar[r] \ar[d]^{\cD_X} 
 & 
 \dots 
\\
\dots 
\ar[r] &
\gH_{n-k}^{\infty,\ov{p}}(X) 
\ar[r] &
\gH_{n-k}^{\infty,\ov{p}}(W_{1})\oplus \gH_{n-k}^{\infty,\ov{p}}(W_{2)}
 \ar[r] 
&
\gH_{n-k}^{\infty,\ov{p}}(W_{1}\cap W_{2})
 \ar[r]  &
 \gH_{n-k-1}^{\infty,\ov{p}}(X)
  \ar[r]&
  \dots.
}
$$

\emph{$\bullet$ Property}  (iii) We apply    \eqref{pro}, \eqref{Cone}, 
\propref{prop:foisR}
 and
\propref {prop:conefoisR}.
First, we  have the isomorphism 
$
\crH^k_{\ov{p}}(\R^i\times \rc L)
= 0 =
\gH_{n-k}^{\infty,\ov{p}}(\R^i\times \rc L)$
for $k > \ov p(\tv)$, or equivalently $n-k < i + D \ov p(\tv) +2$. 
(Observe that $D\ov{p}(\tv)=n-i-2-\ov{p}(\tv)$.)
Next, let $k \leq \ov p(\tv)$.
The following commutative diagram comes from \eqref{conm}.
$$
\xymatrix{	
\crH^k_{\ov{p}}(\R^i\times \rc L)
\ar[r] \ar[d]_{\cD_{\R^i \times \rc L }}&
\crH^k_{\ov{p}}(\R^i \times L \times ]0,1[)  
\ar[d]^{\cD_{\R^i \times L \times ]0,1[ }}
\\
\gH_{n-k}^{\infty,\ov{p}}(\R^i \times \rc L)   
\ar[r] &
\gH_{n-k}^{\infty,\ov{p}}(\R^i \times  L \times ]0,1[).
}
$$
The right  column is an isomorphism by hypothesis. Since horizontal rows are also isomorphisms,  we 
deduce that $\cD_{\R^i \times \rc L }$ is an isomorphism. This proves Property  (iii).
 \end{proof}
 
 \begin{corollary}\label{cor:intersectionproduct}
 Let $(X,\ov p)$ be an $n$-dimensional,  second countable and oriented perverse pseudomanifold.  
 The Borel-Moore intersection homology can be endowed with an intersection product, induced from
 Poincar\'e duality and a cup product.  
 \end{corollary}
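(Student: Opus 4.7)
The plan is to transport the intersection cup product \eqref{equa:cupprduitTWcohomologie} along the Poincar\'e duality isomorphism $\cD_{X}$ of \thmref{thm:dual}. Explicitly, given two Borel-Moore intersection homology classes $\alpha\in\gH^{\infty,\ov{p}}_{i}(X;R)$ and $\beta\in\gH^{\infty,\ov{q}}_{j}(X;R)$, I would form
$$\alpha\bullet\beta:=\cD_{X}\bigl(\cD_{X}^{-1}(\alpha)\smile \cD_{X}^{-1}(\beta)\bigr)\in\gH^{\infty,\ov{p}+\ov{q}}_{i+j-n}(X;R),$$
using the fact that $\cD_{X}$ is an isomorphism in each perversity to invert it in the first and third slots, and combining with the cup product from \eqref{equa:cupprduitTWcohomologie} in the middle.

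The first thing I would check is that the degrees and perversities match. Since $\cD_{X}^{-1}(\alpha)$ sits in bidegree $(n-i,\ov{p})$ and $\cD_{X}^{-1}(\beta)$ in $(n-j,\ov{q})$, their cup product sits in bidegree $(2n-i-j,\ov{p}+\ov{q})$, which $\cD_{X}$ then sends to homological degree $n-(2n-i-j)=i+j-n$ with perversity $\ov{p}+\ov{q}$. This matches the classical intersection indexing, and the perversity assignment is consistent with the naturality of the cap product in \eqref{equa:caphomologyhomologyBM} (cap with $\Gamma_{X}$ carries the trivial perversity $\ov{0}$).

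Next, I would derive the algebraic properties of $\bullet$ directly from those of $\smile$ recorded in \cite[Proposition 4.2]{CST4}. Bilinearity is immediate from the $R$-linearity of $\cD_{X}$ and $\cD_{X}^{-1}$. Associativity and graded commutativity of $\bullet$ are obtained by applying $\cD_{X}$ to the corresponding identities for $\smile$; the fundamental class $\Gamma_{X}=\cD_{X}(1)$ acts as a two-sided unit because $1\smile a=a\smile 1=a$ in $\crH^{*}_{\ov{p}}(X;R)$. One could equivalently describe the product at the chain level by $\alpha\bullet\beta=(\cD_{X}^{-1}\alpha\smile \cD_{X}^{-1}\beta)\frown\Gamma_{X}$, using the identity $(\eta\smile\omega)\frown\xi=\eta\frown(\omega\frown\xi)$ from \secref{subsec:cap}.

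The main obstacle is essentially not an obstacle: with \thmref{thm:dual} established and the cup product \eqref{equa:cupprduitTWcohomologie} already in place, the construction is purely formal transport of structure. The only genuine points requiring care are the perversity bookkeeping (ensuring $\ov{p}+\ov{q}$ is indeed the correct index on both sides) and, if one wishes to lift the product to the chain level, the fact that the lift would depend on the choice of a representative cycle for $\Gamma_{X}$; this is harmless in (co)homology by \thmref{thm:dual}. Any remaining subtlety (for example, what happens in the presence of codimension-one strata) is inherited from the corresponding subtlety on the cohomological side and is therefore already absorbed by the statement of \thmref{thm:dual}.
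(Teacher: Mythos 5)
Your construction is exactly the paper's: the product is defined by transporting the intersection cup product \eqref{equa:cupprduitTWcohomologie} along the duality isomorphism $\cD_{X}$ of \thmref{thm:dual}, i.e.\ by the commutative square \eqref{equa:fork}, and your degree and perversity bookkeeping agrees with it. The additional verifications you sketch (bilinearity, associativity, commutativity, the fundamental class as unit) are correct formal consequences of the corresponding properties of the cup product, so the proposal is correct and takes essentially the same route as the paper.
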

 
 \begin{proof}
 The following commutative diagram, whose vertical maps are isomorphisms, defines 
 the intersection product $\pitchfork$ from the cup product,
 \begin{equation}\label{equa:fork}
 \xymatrix{
\crH^k_{\ov{p}}(X;R)\otimes \crH^{\ell}_{\ov{q}}(X;R)
\ar[r]^-{-\smile-}\ar[d]^{\cong}\ar[d]_{\cD_X\otimes \cD_X}&
\crH^{k+\ell}_{\ov{p}+\ov{q}}(X;R)
\ar[d]^{\cD_X}\ar[d]_{\cong}
\\
\gH_{n-k}^{\infty,\ov{p}}(X;R)\otimes \gH_{n-\ell}^{\infty,\ov{q}}(X;R)
\ar[r]^-{-\pitchfork-}&
\gH_{n-k-\ell}^{\infty,\ov{p}+\ov{q}}(X;R).
}
\end{equation}
 \end{proof}

\begin{remark}
If $\ov{p}\colon \N\to \Z$ is a loose perversity as in \cite{MR2276609}, we have an inclusion of complexes
$\iota\colon
\gC_{*}^{\infty,\ov{p}}(X;R)
\hookrightarrow
I^{\ov{p}} \gC_{*}^{\infty}(X;R)$,
where $I^{\ov{p}} \gC_{*}^{\infty}(X;R)$
denotes the chain complex studied by Friedman in \cite{MR2276609}.
With the  technique of the  proof  of \thmref{thm:dual}, using \propref{prop:supersuperbredon}, we  also deduce that
$\iota$ is a quasi-isomorphism.
For the complex $I^{\ov{p}} \gC_{*}^{\infty}(X;R)$, the
existence of a Mayer-Vietoris exact sequence  
follows from the fact that its  homology proceeds from sheaf theory
and the computations involving  a cone  are done in
\cite[Propositions 2.18 and 2.20]{MR2276609}.
Thus, the blown-up intersection cohomology is Poincar\'e dual to the 
Borel-Moore intersection homology defined in \cite{MR2276609}, for any commutative ring of coefficients.
\end{remark}

\begin{remark}\label{rem:GJE}
 An intersection cohomology, denoted $\gH_{\ov{p}}^{*}(X;R)$, can also be defined from the linear dual,
 $\gC_{\ov{p}}^{*}(X;R)=\hom(\gC^{\ov{p}}_{*}(X;R),R)$. 
 This is the point of view adopted in  \cite{FriedmanBook}, \cite{FR1}, \cite{FM}.
 If $(X,\ov{p})$ is a locally $(\ov{p},R)$-torsion free perverse space $X$, this cohomology is isomorphic to the blown-up
 cohomology with the complementary perversity,
  $\gH_{\ov{p}}^{*}(X;R)\cong \crH^*_{D\ov{p}}(X;R)$, see \cite[Theorem F]{CST4}.
 (We send the reader to \cite{GM1} for the condition on the torsion, 
 noting that it is always satisfied if $R$ is a field.)
 Therefore, in this case, there is a Poincar\'e duality,
 $\gH_{\ov{p}}^{k}(X;R)\cong  \gH_{n-k}^{\infty,D\ov{p}}(X;R)$,
 induced by a cup product. 
 But, in contrast to the blown-up cohomology situation, such isomorphism does not exist in general
 as shows the following example.
 \end{remark}

 \begin{example}\label{exam:pasdual}
 Let  $X=\Sigma\R P^3\backslash{*}$ be the complementary subspace of a regular point in the suspension of
 the real projective space $\R P^3$, together with the constant perversity on 1, denoted $\ov{1}$.
 (Observe $\ov{1}=D\ov{1}$.)
 Direct calculations from the Mayer-Vietoris exact sequences give as only non zero values
  for the  homology and cohomologies,
 \begin{enumerate}[(i)]
 \item $\crH^0_{\ov{1}}(X;\Z)= \Z$ and $\crH^3_{\ov{1}}(X;\Z)= \Z_{2}$,
 \item $\gH_{1}^{\infty,\ov{1}}(X;\Z)=\Z_{2}$ and $\gH_{4}^{\infty,\ov{1}}(X;\Z)=\Z$,
 \item $\gH^{0}_{\ov{1}}(X;\Z)=\Z$ and $\gH^{1}_{\ov{1}}(X;\Z)=\Z_{2}$.
 \end{enumerate}
 We notice $  \crH^k_{\ov{1}}(X;R)
\cong
 \gH^{\infty,\ov{1}}_{4-k}(X;R)$ 
 as stated in \thmref{thm:dual}
 but $\gH^{1}_{\ov{1}}(X;\Z)=\Z_{2}\not\cong \gH_{3}^{\infty,\ov{1}}(X;\Z)=0$.
 \end{example}

\begin{remark}\label{rem:lastone}
 In the case of a compact oriented pseudomanifold, a diagram as 
 \eqref{equa:fork}
 has already been introduced in \cite[Section~4]{CST7} for the definition of a product in intersection homology,
 for any ring of coefficients.
 For a PL pseudomanifold, a product of geometric cycles is defined by Goresky and MacPherson
 (\cite{GM1})
 for a simplicial intersection homology.
 The relationship between \eqref{equa:fork} and the geometric product can be specified as below.
  (For sake of simplicity,
 we consider coefficients in a field.)
 \begin{itemize}
 \item From a diagram similar to \eqref{equa:fork}, Friedman and McClure
 define an intersection product  on the intersection homology of a PL pseudomanifold, via a duality isomorphism
 (\cite{FM})
 with the intersection cohomology $\gH_{\ov{p}}^{*}(X)$ recalled in \remref{rem:GJE}. 
 In \cite[Theorem 1.3]{2018arXiv181210585F},
 they connect it with the geometric product of \cite{GM1}.
 \item As recalled in  \remref{rem:GJE}, the blown-up cohomology studied in this paper is connected to 
 $\gH_{\ov{p}}^{*}(X)$ through a natural quasi-isomorphism, which is compatible with the algebra structures
 (see \cite[Corollary 4.4 and end of Section 4]{CST6}).
 \end{itemize}
 Consequently, the intersection product on $\gH_{\ast}^{\bullet}(X;R)$ defined by \eqref{equa:fork} 
 coincides with the original geometric product
 of Goresky and McPherson in the PL compact case.
 
 \medskip
 Finally, mention the existence of a construction of the intersection product via a Leinster partial algebra structure,
 done by Friedman in \cite{MR3857199}, who extends to the perverse setting the construction made by McClure
 on the chain complex of a PL manifold, see \cite{MR2255502}.
 \end{remark}
 
\providecommand{\bysame}{\leavevmode\hbox to3em{\hrulefill}\thinspace}
\providecommand{\MR}{\relax\ifhmode\unskip\space\fi MR }
\providecommand{\MRhref}[2]{%
  \href{http://www.ams.org/mathscinet-getitem?mr=#1}{#2}
}
\providecommand{\href}[2]{#2}


\begin{thebibliography}{10}

\bibitem{MR0173226}
N.~Bourbaki, \emph{\'el\'ements de math\'ematique. {I}: {L}es structures
  fondamentales de l'analyse. {F}ascicule {VIII}. {L}ivre {III}: {T}opologie
  g\'en\'erale. {C}hapitre 9: {U}tilisation des nombres r\'eels en topologie
  g\'en\'erale}, Deuxi\`eme \'edition revue et augment\'ee. Actualit\'es
  Scientifiques et Industrielles, No. 1045, Hermann, Paris, 1958. \MR{0173226}

\bibitem{Bre}
Glen~E. Bredon, \emph{Sheaf theory}, second ed., Graduate Texts in Mathematics,
  vol. 170, Springer-Verlag, New York, 1997. \MR{1481706}

\bibitem{CST6}
David Chataur, Martintxo Saralegi-Aranguren, and Daniel Tanr\'e, \emph{Steenrod
  squares on intersection cohomology and a conjecture of {M} {G}oresky and {W}
  {P}ardon}, Algebr. Geom. Topol. \textbf{16} (2016), no.~4, 1851--1904.
  \MR{3546453}

\bibitem{CST7}
\bysame, \emph{Singular decompositions of a cap product}, Proc. Amer. Math.
  Soc. \textbf{145} (2017), no.~8, 3645--3656. \MR{3652815}

\bibitem{CST4}
\bysame, \emph{Blown-up intersection cohomology}, An alpine bouquet of
  algebraic topology, Contemp. Math., vol. 708, Amer. Math. Soc., Providence,
  RI, 2018, pp.~45--102. \MR{3807751}

\bibitem{CST1}
\bysame, \emph{Intersection cohomology, simplicial blow-up and rational
  homotopy}, Mem. Amer. Math. Soc. \textbf{254} (2018), no.~1214, viii+108.
  \MR{3796432}

\bibitem{CST2}
\bysame, \emph{Poincar\'e duality with cap products in intersection homology},
  Adv. Math. \textbf{326} (2018), 314--351. \MR{3758431}

\bibitem{CST5}
\bysame, \emph{Blown-up intersection cochains and {D}eligne's sheaves},
  Geometriae Dedicata (2019), Available at
  \url{https://doi.org/10.1007/s10711-019-00458-w}.

\bibitem{CST3}
\bysame, \emph{Intersection homology: general perversities and topological
  invariance}, Illinois J. Math. \textbf{63} (2019), no.~1, 127--163.
  \MR{3959870}

\bibitem{FriedmanBook}
Greg Friedman, \emph{Singular intersection homology}, Available at
  \url{http://faculty.tcu.edu/gfriedman/index.html}.

\bibitem{MR2276609}
\bysame, \emph{Singular chain intersection homology for traditional and
  super-perversities}, Trans. Amer. Math. Soc. \textbf{359} (2007), no.~5,
  1977--2019 (electronic). \MR{2276609 (2007k:55008)}

\bibitem{FR1}
\bysame, \emph{Intersection homology and {P}oincar\'e duality on homotopically
  stratified spaces}, Geom. Topol. \textbf{13} (2009), no.~4, 2163--2204.
  \MR{2507117 (2010d:55010)}

\bibitem{MR3857199}
\bysame, \emph{The chain-level intersection product for {PL} pseudomanifolds
  revisited}, J. Singul. \textbf{17} (2018), 330--367. \MR{3857199}

\bibitem{FM}
Greg Friedman and James~E. McClure, \emph{Cup and cap products in intersection
  (co)homology}, Adv. Math. \textbf{240} (2013), 383--426. \MR{3046315}

\bibitem{2018arXiv181210585F}
\bysame, \emph{{Intersection homology duality and pairings: singular, PL, and
  sheaf-theoretic}}, arXiv e-prints (2018), arXiv:1812.10585.

\bibitem{GM1}
Mark Goresky and Robert MacPherson, \emph{Intersection homology theory},
  Topology \textbf{19} (1980), no.~2, 135--162. \MR{572580 (82b:57010)}

\bibitem{GM2}
\bysame, \emph{Intersection homology. {II}}, Invent. Math. \textbf{72} (1983),
  no.~1, 77--129. \MR{696691 (84i:57012)}

\bibitem{GS}
Mark Goresky and Paul Siegel, \emph{Linking pairings on singular spaces},
  Comment. Math. Helv. \textbf{58} (1983), no.~1, 96--110. \MR{699009
  (84h:55004)}

\bibitem{MR800845}
Henry~C. King, \emph{Topological invariance of intersection homology without
  sheaves}, Topology Appl. \textbf{20} (1985), no.~2, 149--160. \MR{800845
  (86m:55010)}

\bibitem{MacPherson90}
Robert MacPherson, \emph{Intersection homology and perverse sheaves},
  Unpublished AMS Colloquium Lectures, San Francisco, 1991.

\bibitem{MR2255502}
James~E. McClure, \emph{On the chain-level intersection pairing for {PL}
  manifolds}, Geom. Topol. \textbf{10} (2006), 1391--1424. \MR{2255502}

\bibitem{MR2496056}
\bysame, \emph{Erratum for the paper `{O}n the chain-level intersection pairing
  for {PL} manifolds'}, Geom. Topol. \textbf{13} (2009), no.~3, 1775--1777.
  \MR{2496056}

\bibitem{MR1269324}
Charles~A. Weibel, \emph{An introduction to homological algebra}, Cambridge
  Studies in Advanced Mathematics, vol.~38, Cambridge University Press,
  Cambridge, 1994. \MR{1269324 (95f:18001)}

\bibitem{Wil}
Stephen Willard, \emph{General topology}, Addison-Wesley Publishing Co.,
  Reading, Mass.-London-Don Mills, Ont., 1970. \MR{0264581}

\end{thebibliography}
\end{document}